\UseRawInputEncoding
\documentclass[12pt]{article}

\usepackage{amsmath,amssymb,amsthm}
\usepackage[margin=1in]{geometry}
\usepackage{booktabs}
\usepackage{hyperref}

\newtheorem{theorem}{Theorem}
\newtheorem{proposition}[theorem]{Proposition}
\newtheorem{corollary}[theorem]{Corollary}
\newtheorem{conjecture}[theorem]{Conjecture}
\theoremstyle{definition}
\newtheorem{definition}[theorem]{Definition}
\newtheorem{remark}[theorem]{Remark}
\newtheorem{observation}[theorem]{Observation}

\newcommand{\BSR}{\mathrm{BSR}}
\newcommand{\Z}{Z}
\newcommand{\x}{{\bf x}}
\newcommand{\y}{{\bf y}}

\title{Overtake and the recursive-line equality
for second-order Zarankiewicz numbers}
\author{Chen Ouyang\footnote{School of Computer Science and Technology,
		Dongguan University of Technology, Dongguan 523000, China. ({\tt oych26@163.com}).}
		\and Liqun Qi\footnote{Jiangsu Provincial Scientific Research Center of Applied Mathematics, Nanjing 211189, China.
			Department of Applied Mathematics, The Hong Kong Polytechnic University, Hung Hom, Kowloon, Hong Kong.
			({\tt maqilq@polyu.edu.hk})}
	}
\date{\today}

\begin{document}

\maketitle

\begin{abstract}
We study two questions about the second-order Zarankiewicz numbers.
The first is the recursive-line equality
$z_2(m,n)=z_{RL}(m,n)$, known to hold in every computed case and
conjectured in general.  The second, here called \emph{overtake}, is
the strict inequality $z_{2A}(m,n)>z_2(m,n)$ between the
second-order augmented number and the second-order number.  Both are
governed by the number $H_{\mathrm{limited}}(m,n)$ of unoccupied
cells in a limited extremal configuration.  We prove that the first
increase $z_2\to z_2+1$ already costs three unoccupied cells,
forcing a unit of one-edge deficit, and the cell identity then bounds
the excess $z_{2A}-z_2$ by
$\lfloor(H_{\mathrm{limited}}-1)/2\rfloor$.  In particular, when
the recursive-line hole count is at most two, no overtake occurs.
We conjecture the sharp budget $\lfloor H_{\mathrm{limited}}/3\rfloor$
and that $H_{\mathrm{limited}}\ge5$ is the threshold for an
overtake.  In the computed range $n\le7$, no overtake occurs at any
order, the sole exception being the known case $(4,4)$.  The recursive-line equality $z_2=z_{RL}$ is verified
at every order in the range where both values are determined; the
orders at which it remains unsettled are $(12,4)$ and $(14,4)$ at
width four, $(11,5)$ at width five, and $(8,7)$ at width seven.
\end{abstract}

\textbf{Keywords:} biquadratic SOS rank; Zarankiewicz number; second-order Zarankiewicz number; second-order augmented Zarankiewicz number; overtake; recursive-line equality; irreducible doubly simple form; \(C_4\)-free graph

\textbf{MSC:} 14P10; 05C35; 11E25; 15A69; 90C22

\section{Introduction}

Denote \([n]=\{1,\dots,n\}\).  The classical Zarankiewicz number
\(z(m,n)\) is defined as the maximum number of edges of a
\(C_4\)-free \(m\times n\) bipartite graph \(G=(S,T,E)\), where the
vertex sets \(S\) and \(T\) have \(m\) and \(n\) vertices, and each
edge has the form \((i,j)\) with \(i\in S\), \(j\in T\); see
\cite{za51,kst54,culik56,re58,gu69,ni10,chm24,bo04}.  It was found
in \cite{qcx26a} that \(z(m,n)\) is equal to the maximum SOS rank of
the \(m\times n\) SOS simple biquadratic form
\[
f(\x,\y)=\sum_{(i,j)\in E} x_i^2 y_j^2 .
\]

Recently, L\"ofberg and Qi \cite{lq26} introduced the
\emph{second-order Zarankiewicz number} \(z_2(m,n)\) as the maximum
SOS rank of the \(m\times n\) SOS double simple biquadratic form
\[
P_G(\x,\y)=\sum_{(i,j)\in E_1} x_i^2 y_j^2
+\sum_{(i,j;k,l)\in E_2} (x_i y_j + x_k y_l)^2 ,
\]
{where \(E_1\)  and \(E_2\)  are,  respectively,  the 1-edge set  and the 2-edge set of  an augmented bipartite graph \(G=(S,T,E_1\cup E_2)\)  with  \(|E_1|=z(m,n)\)  and no two edges occupy the same cell (i.e., a pair \((i,j) \) with \(i\in S ,~j\in T\)  can only appear in one edge); the form \(P_G\) is called the \emph{displayed form} of \(G\).  Its \emph{SOS rank} is the least number of squares of real bilinear forms in \((\x,\y)\) whose sum is \(P_G\), so that \(P_G\) is irreducible exactly when this rank equals the displayed length \(R(G)=|E_1|+|E_2|\); the maximum defining \(z_2(m,n)\) is taken over irreducible displayed forms. }
They also
introduced the \emph{recursive-line Zarankiewicz number}
\(z_{RL}(m,n)\), which is a checkable parameter, together with the
\emph{signed Zarankiewicz number} \(z_{SL}(m,n)\), giving the
unconditional hierarchy
\[
\mathrm{BSR}(m,n)\ \ge\ z_2(m,n)\ \ge\ z_{SL}(m,n)
\ \ge\ z_{RL}(m,n)\ \ge\ z(m,n),
\]
where \(\mathrm{BSR}(m,n)\) is the maximum SOS rank of all
\(m\times n\) SOS biquadratic forms
\[
f(\x,\y)=\sum_{i,k\in[m],\,j,l\in[n]} a_{ijkl}\,x_i y_j x_k y_l .
\]
The recursive-line and signed numbers are maxima of the same
displayed length \(R(G)=|E_1|+|E_2|\) over the limited
configurations whose closure satisfies the criterion
\((\mathrm{RW}3^+)\), respectively, the signed criterion
\((\mathrm{RW}3^\pm)\).

\smallskip\noindent\textbf{Notation.}  For an augmented bipartite
graph \(G=(S,T,E_1\cup E_2)\), a cell \((i,j)\) with \(i\in S\),
\(j\in T\) is \emph{occupied} by \(G\) if it is a 1-edge of \(E_1\) or
one half of a 2-edge of \(E_2\), and \(H(G)\) denotes the number of
\emph{unoccupied} cells.  The \emph{displayed length}
\(R(G)=|E_1|+|E_2|\) of \(G\) is the number of displayed squares in
\(P_G\).  A simple configuration \(G\) whose one-edge graph is
\(C_4\)-free with \(|E_1|=z(m,n)\) is called \emph{limited}, and
\emph{limited extremal} when in addition it is irreducible and
\(R(G)=z_2(m,n)\).  We write \(H_{\mathrm{limited}}(m,n)\) for the
number of unoccupied cells in a limited extremal configuration and
\(d(G)=z(m,n)-|E_1(G)|\) for the \emph{one-edge deficit} of a
configuration \(G\), and
\(h_{\mathrm{RL}}(m,n)=mn+z(m,n)-2z_{RL}(m,n)\) for the \emph{recursive-line hole
count}.
The precise definitions, and the notion of irreducibility, are given in
Section~\ref{sec:defs}.

Two questions about the relation among the three parameters
\(z_{RL}\le z_{SL}\le z_2\) motivate this paper.

\smallskip\noindent\textbf{The recursive-line equality.}  It was
discovered in \cite{lq26,xy26} that
\begin{equation}\label{e1}{
z_2(m,n)=z_{SL}(m,n)=z_{RL}(m,n)
\le \Z(m,n):=\left\lfloor\frac{mn+z(m,n)}{2}\right\rfloor}
\end{equation}
in every case that has been computed.  Here \(\Z(m,n)\) is the
\emph{universal cell bound} of \cite{lq26}, and when equality holds throughout
\eqref{e1}, that is when \(z_2(m,n)=z_{SL}(m,n)=z_{RL}(m,n)=\Z(m,n)\),
either all cells are occupied or exactly one is unoccupied.  The identity {\(z_{RL}=z_{SL}= z_2\)} is established in many cases
\cite{lq26,cc26,ccq26,clq26,leb26a,xc26}, and it is conjectured to
hold for every \(m,n\ge2\).  By the hierarchy
\(z_{RL}\le z_{SL}\le z_2\), this identity is equivalent to the single
equation \(z_2=z_{RL}\), and it implies each of the weaker assertions
\(z_2=z_{SL}\) and \(z_{SL}=z_{RL}\), neither of which is known in
general, the second being the question of \cite{ccq26}.

\smallskip\noindent\textbf{Overtake.}  Very recently, Qi and Cui
\cite{qc26} introduced the \emph{second-order augmented
Zarankiewicz number} \(z_{2A}(m,n)\) by dropping the limitedness
constraint \(|E_1|=z(m,n)\) while keeping the irreducibility of the
displayed form \(P_G\).  This gives the expanded hierarchy
\[
\mathrm{BSR}(m,n)\ \ge\ z_{2A}(m,n)\ \ge\ z_2(m,n)
\ \ge\ z_{SL}(m,n)\ \ge\ z_{RL}(m,n)\ \ge\ z(m,n).
\]
Qi and Cui exhibited the first separation
\[
z_{2A}(4,4)=11>10=z_2(4,4),
\]
so the augmented parameter is a strictly better lower bound of
\(\mathrm{BSR}\) than the second-order number at \((4,4)\).  We call
the phenomenon \(z_{2A}(m,n)>z_2(m,n)\) an \emph{overtake}.

{Both questions are studied via the quantity \(H_{\mathrm{limited}}(m,n)\),  the number of unoccupied cells in a \emph{limited extremal configuration}. }

The main results of this paper are the following.

\begin{itemize}
\item \emph{Hole budget (Theorem~\ref{thm:budget}).}  The identity
\(2(R(G)-z_2)=H_{\mathrm{limited}}-H(G)-d(G)\) holds, and an overtake
forces \(H_{\mathrm{limited}}(m,n)\ge3\), with the budget
\(z_{2A}\le z_2+\lfloor(H_{\mathrm{limited}}-1)/2\rfloor\).

\item \emph{Low-hole reduction (Theorem~\ref{thm:low-hole}).}  If
\(h_{\mathrm{RL}}(m,n)\le2\), then
\(z_{2A}(m,n)=z_2(m,n)\), i.e., no overtake occurs.

\item \emph{Sharp budget and threshold (Conjectures~\ref{conj:budget}
and~\ref{conj:threshold}).}  We conjecture the sharp budget
\(z_{2A}\le z_2+\lfloor H_{\mathrm{limited}}/3\rfloor\), and that an
overtake occurs if and only if \(H_{\mathrm{limited}}\ge5\), with
\(\min(m,n)\ge3\) needed because the width-two grid \(2\times6\) is
the degenerate exception.

\item \emph{Computed range \(n\le7\) (Theorems~\ref{thm:n4-cases},
\ref{thm:n5-cases}, \ref{thm:n6-cases} and~\ref{thm:n7-cases}).}
At every order of widths \(4,5,6,7\) with
\((m,n)\ne(4,4)\) at which all three values \(z_{2A}\), \(z_{RL}\),
\(z_2\) are determined, one has
\(z_{2A}(m,n)=z_{RL}(m,n)=z_2(m,n)\).  The orders where the
recursive-line equality remains unsettled are \((12,4)\), where
\(z_{RL}=32\) and \(z_2\in[32,33]\), \((14,4)\), where
\(z_{RL}\in[37,38]\), \((11,5)\), where \(z_{RL}=37\) and
\(z_2\in[37,38]\), and \((8,7)\), where \(z_{RL}=38\) and
\(z_2\in[38,39]\); no overtake occurs at any of these four orders.
  The widths two
and three are settled in Section~2.

\item \emph{Almost-everywhere cell bound
(Proposition~\ref{prop:ae}).}  Conditionally on
\(H_{\mathrm{limited}}\le4\) off \((4,4)\) and off the degenerate
widths \(\min(m,n)\le2\), one has
\(z_{2A}(m,n)\ge\Z(m,n)-2\) for every \((m,n)\) with
\(\min(m,n)\ge3\), the value at \((4,4)\) being
\(z_{2A}(4,4)=11=\Z(4,4)-1\).
\end{itemize}

These results are distinct from two phenomena already present in
the literature.  The first is the separation \(z_2>z_L\), where
\(z_L\) is the first-order limited augmented Zarankiewicz number of
\cite{qcx26a,xyq26}: this separation is common and does not by
itself produce an overtake.  The second is the saturation
\(z_2=\Z\) at \((15,6)\) and \((21,7)\), proved in \cite{lq26},
which forces \(z_{2A}=z_2\) there and shows that the augmented
parameter never improves on the second-order one at those orders.
More generally, Lebedev's fixed-width deficit theorem
\cite[Theorem~1.2]{leb26a} bounds the gap
\(\Z-z_{RL}\), and with it the augmented excess
\(z_{2A}-z_2\le\Z-z_2\le\Z-z_{RL}\), by \(n^3/6+O(n^2)\) at each
fixed width; being a fixed-width statement, it does not by itself
control the case when both dimensions grow.  That case is covered by
the uniform half-density theorem and its unrestricted comparison
corollary \cite[Theorem~1.1 and Corollary~1.3]{leb26a}.  The two
regimes together give \(\Z-z_{RL}=o(mn)\) as
\(\max(m,n)\to\infty\), so any overtake is invisible at leading
order and
\(z_{2A}=z_2+o(mn)=z_{RL}+o(mn)\) uniformly as both dimensions grow.

The paper is organized as follows.  Section~2 provides the notation, the cell identity and the hole-budget theorem, establishes the low-hole reduction, states the sharp-budget and threshold conjectures, records the conditional almost-everywhere cell bound,
settles the two degenerate widths \(n=2\) and \(n=3\), and discusses the two questions in the light of Lebedev's fixed-width deficit theorem.  Sections~3 to~6 treat the widths \(n=4,5,6,7\) in turn:
for each width we collect the known values of \(z_{RL}\) and
\(H_{\mathrm{limited}}\), apply the low-hole reduction at every
order with \(h_{\mathrm{RL}}\le2\), reduce the remaining overtake
question to a finite enumeration of candidate configurations, and conclude that no overtake occurs at any determined order except \((4,4)\).  The recursive-line equality \(z_2=z_{RL}\) is thereby established wherever the computed range determines it, the open orders being the four listed above.  Section~\ref{sec:discussion} closes the paper by separating the overtake and recursive-line equality questions and explaining the role of the hole count \(h_{\mathrm{RL}}\) in each.

\section{Definitions and Basic Theory}\label{sec:defs}
{
A \(m\times n\) bipartite graph \(G_0=(S,T,E_1)\) is a graph having vertex partition \(S\cup T \) with \(|S|=m, |T|=n\), such that each edge \((i,j)\in E_1\) satisfies \(i\in S\) and \(j\in T\).
\begin{definition}[Augmented bipartite graphs]
An \(m \times n\) augmented bipartite graph \(G=(S,T,E_1\cup E_2)\)  is constructed from an \(m \times n\)  bipartite graph \(G_0=(S,T,E_1)\) by adding a 2-edge set \(E_2\). A 2-edge has the form \((i,j;k,l)\), where  vertices  \(i,k\)  are in \(S\) and \(j,l\)  in \(T\); its two cells \((i,j)\) and \((k,l)\) are distinct, and they may share a row or a column.
\end{definition}
 A cell \((i,j)\) is called occupied in \(G\) if  it is a 1-edge in \(E_1\) or  half of a 2-edge in \(E_2\).  Denote by \(R(G)=|E_1|+|E_2|\)  the displayed length (i.e., the number of displayed squares) of
 \[P_G(x,y)=\sum_{(i,j)\in E_1} x_i^2 y_j^2
+\sum_{(i,j;k,l)\in E_2} (x_i y_j + x_k y_l)^2 \] associated with \(G\)   and let \(H(G)\) be the number of unoccupied cells.  \begin{definition}[SOS rank]\label{def:sosrank}
The \emph{SOS rank} \(\operatorname{SOS}(P)\) of a biquadratic form \(P\) is the least number of squares of real bilinear forms in \((\x,\y)\) whose sum is \(P\); equivalently, writing \(v(\x,\y)=(x_iy_j)_{(i,j)\in[m]\times[n]}\), it is the least rank of a positive semidefinite matrix \(Q\) with \(v^{\mathsf T}Qv=P\).
\end{definition}
The configuration \(G\) is called \emph{irreducible} if  \(\operatorname{SOS}(P_G(x,y))=R(G)\).
We call that \(G\) satisfies \emph{simplicity} if no two edges in \(G\)  occupy a common cell , and then it yields the \emph{cell identity}
\begin{equation}\label{eq:cell}
2R(G)=mn+|E_1|-H(G),
\end{equation}
and further leads to}
\begin{equation}\label{eq:Z}
z_2(m,n)\le z_{2A}(m,n)\le \Z(m,n):=
\left\lfloor\frac{mn+z(m,n)}{2}\right\rfloor .
\end{equation}

\begin{definition}[Overtake and recursive-line equality]
An \emph{overtake} at \((m,n)\) is a strict inequality
\(z_{2A}(m,n)>z_2(m,n)\).  The \emph{recursive-line equality} is
\(z_2(m,n)=z_{RL}(m,n)\).  The \emph{recursive-line augmented number}
\(z_{RLA}(m,n)\) is defined from the recursive-line criterion
\((\mathrm{RW}3^+)\) by dropping the limitedness constraint, exactly as
\(z_{2A}\) is defined from \(z_2\); the \emph{recursive-line overtake} is
the analogous inequality \(z_{RLA}(m,n)>z_{RL}(m,n)\).
\end{definition}
{
\begin{definition}[Limited extremal configuration]
If an \(m\times n\) augmented bipartite graph \(G\) satisfies simplicity and the bipartite graph \(G_0\) is \(C_4\)-free with \(|E_1|=z(m,n)\),  then it is called a limited (augmented) configuration. Furthermore, if \(G\) is irreducible and \(R(G)=z_2(m,n)\), then it is called a \emph{limited extremal configuration}.
\end{definition}

\begin{remark}[How reducibility is certified]\label{rem:cert}
The Gram matrix of the displayed representation of \(P_G\) has rank exactly \(R(G)\), because the displayed bilinear forms have pairwise disjoint supports.  Reducibility of \(P_G\) is therefore certified not by the rank of that matrix but by exhibiting a second feasible Gram matrix, that is a positive semidefinite \(Q\) with \(v^{\mathsf T}Qv=P_G\) and \(\operatorname{rank}Q\le R(G)-1\), equivalently a representation of \(P_G\) by at most \(R(G)-1\) squares of bilinear forms.  Rank minimization over this feasible set is what the finite analyses of Sections~\ref{sec:n=4}--\ref{sec:n=7} carry out.
\end{remark}

Let \(H_{\mathrm{limited}}(m,n)\) denote the number of unoccupied
cells in a limited extremal configuration, then by  \eqref{eq:cell} }
\[
2z_2(m,n)=mn+z(m,n)-H_{\mathrm{limited}}(m,n).
\]
Hence \(H_{\mathrm{limited}}\) is the deficit of \(z_2\) from the
universal cell bound \(\Z\).  Since \(z_2\ge z_{RL}\), one has
\(H_{\mathrm{limited}}(m,n)\le h_{\mathrm{RL}}(m,n)\), where
\[h_{\mathrm{RL}}(m,n)=mn+z(m,n)-2z_{RL}(m,n).\]

\subsection{The \((4,4)\) overtake}

The paper \cite{qc26} proves the following.

\begin{theorem}[{\cite[Theorem 3.1]{qc26}}]\label{thm:44}
\[
\BSR(4,4)\ge z_{2A}(4,4)=11>10=z_2(4,4)=z_{RL}(4,4).
\]
The witness is a non-limited configuration with
\(|E_1|=8\), \(|E_2|=3\), \(R(G)=11\), \(H(G)=2\), while the
limited extremal configuration at \((4,4)\) has
\(H_{\mathrm{limited}}(4,4)=5\).  At \((4,4)\) the recursive-line
criterion \((\mathrm{RW}3^+)\) also certifies the same witness, thus
\(z_{RLA}(4,4)=z_{2A}(4,4)=11>10=z_{RL}(4,4)=z_2(4,4)\).
\end{theorem}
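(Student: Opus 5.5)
The statement has four parts: the lower bound \(\BSR(4,4)\ge z_{2A}(4,4)\), the upper bound \(z_{2A}(4,4)\le 11\), the existence of a witness with \(R=11\), and the values \(z_2(4,4)=z_{RL}(4,4)=10\). I would treat them in that order.

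\textbf{Lower bound and upper bound.} The inequality \(\BSR(4,4)\ge z_{2A}(4,4)\) needs no work. An irreducible displayed form is itself an SOS biquadratic form whose SOS rank is \(R(G)\). For the upper bound on \(z_{2A}\), recall that \(z(4,4)=9\). The universal bound \eqref{eq:Z} then gives \(z_{2A}(4,4)\le\lfloor(16+9)/2\rfloor=12\). Reaching \(12\) would require \(|E_1|=9\) and \(H\le1\) by the cell identity \eqref{eq:cell}, which forces a limited configuration. So once \(z_2(4,4)=10\) is known, \(z_{2A}(4,4)\le 11\) follows.

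\textbf{Values of \(z_{RL}\) and \(z_2\).} For \(z_{RL}(4,4)=10\) I would quote the computation in \cite{lq26}. For \(z_2(4,4)=10\), write \(R\) in terms of holes: a limited configuration has \(2R=25-H\). Hence \(R=11\) means \(H=3\), and \(R=12\) means \(H=1\). I would enumerate the \(C_4\)-free \(9\)-edge graphs on \(4\times4\) up to symmetry; essentially there is the one unique extremal graph, the 3-regular-type incidence structure. On its \(7\) free cells I would place \(3\) or \(2\) disjoint 2-edges. For each placement I would exhibit a lower-rank Gram matrix, as in Remark~\ref{rem:cert}, by combining a 2-edge square \((x_iy_j+x_ky_l)^2\) with the 1-edges \(x_i^2y_l^2\) and \(x_k^2y_j^2\) into fewer squares. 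The standard trick uses the \(2\times2\) minor identity to exploit the shared rows and columns in which the 1-edges lie. This shows \(H_{\mathrm{limited}}(4,4)=5\).

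\textbf{Witness with \(R=11\).} I would take the non-limited configuration of \cite{qc26}: \(8\) one-edges forming a \(C_4\)-free graph, for instance a 2-regular bipartite \(8\)-cycle, and \(3\) two-edges on \(6\) of the remaining \(8\) cells, so that \(H=2\). Irreducibility means showing that every PSD Gram matrix \(Q\) with \(v^{\mathsf T}Qv=P_G\) has rank at least \(11\). The plan for this step:
\begin{itemize}
\item The coefficient equations force zero diagonal entries at the two holes, so the corresponding rows and columns of \(Q\) vanish.
\item The remaining freedom lies only in the off-diagonal entries paired by the cross terms \(x_iy_jx_ky_l\) with \(i\ne k\), \(j\ne l\). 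Each such monomial corresponds to a pair of cross-diagonal entries whose sum is fixed.
\item Parametrize these finitely many free variables and show \(\operatorname{rank}Q\ge 11\). The preferred route is to find an \(11\times11\) principal submatrix whose determinant is a positive-definite-forced quantity. The alternative is to show that a kernel vector would force a \(C_4\) in the 1-edge support.
\end{itemize}
The witness must also be checked against \((\mathrm{RW}3^+)\) for the final claim \(z_{RLA}(4,4)=11\), which is a direct finite check.

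\textbf{Main obstacle.} The hard step is the irreducibility certification, in both directions. Proving rank \(\ge 11\) for the witness requires a genuine lower-rank exclusion over a parametrized family. Proving reducibility of every limited \(R=11\) candidate requires the case enumeration. For the former I would first try a recursive-line style argument: peel off rows that contain a 1-edge isolated in a column, each of which contributes rank independently, until the residual matrix is small enough to check explicitly.
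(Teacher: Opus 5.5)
\textbf{The upper bound \(z_{2A}(4,4)\le 11\) has a gap.} With \(R(G)=12\), the cell identity \eqref{eq:cell} gives \(|E_1|-H(G)=8\). Since \(|E_1|\le z(4,4)=9\), there are two solutions, \((|E_1|,H)=(9,1)\) and \((|E_1|,H)=(8,0)\); you kept only the first.

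The second is a non-limited, hole-free configuration: a \(C_4\)-free base with \(8\) one-edges (deficit \(d=1\)) whose \(8\) remaining cells are paired into \(4\) two-edges, so \(R=8+4=12\). The value \(z_2(4,4)=10\) says nothing about it, because \(z_2\) only bounds limited configurations. Indeed, the budget identity \eqref{eq:budget} with \(H_{\mathrm{limited}}(4,4)=5\) reads \(2(R-10)=5-H-d\) and explicitly allows \(R=12\) at \((d,H)=(1,0)\). So Theorem~\ref{thm:budget} alone yields only \(z_{2A}(4,4)\le 12\).

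This is precisely the case the paper, following \cite{qc26}, disposes of by a finite classification of the \(161\) orbits of such configurations, each shown to be reducible in the sense of Remark~\ref{rem:cert}. Compare Corollary~\ref{cor:budget}: at \(H_{\mathrm{limited}}=5\), the sharp bound is equivalent to \(z_{2A}<\Z\). Your own witness lies in the neighbouring non-limited case \((d,H)=(1,2)\) at \(R=11\), so the non-limited configurations are exactly where the exclusion cannot be skipped. Your plan needs an explicit reducibility step for every configuration with \((|E_1|,|E_2|,H)=(8,4,0)\). This step is the substantive half of the equality \(z_{2A}(4,4)=11\), not a consequence of \(z_2(4,4)=10\).

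\textbf{A secondary problem.} The local move you propose for certifying reducibility of the limited candidates cannot by itself lower the rank. On the monomials \(x_iy_j,x_ky_l,x_iy_l,x_ky_j\), the Gram matrices of \((x_iy_j+x_ky_l)^2+x_i^2y_l^2+x_k^2y_j^2\) are \(\begin{pmatrix}1&a\\a&1\end{pmatrix}\oplus\begin{pmatrix}1&b\\b&1\end{pmatrix}\) with \(a+b=1\). This is because \(x_i^2y_jy_l\), \(x_ix_ky_j^2\), \(x_ix_ky_l^2\), \(x_k^2y_jy_l\) each arise from a single pair and have coefficient zero. Positive semidefiniteness forces \(a\in[0,1]\), and the minimum rank is \(3\), attained at \(a\in\{0,1\}\). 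So these three squares never merge into two. Lower-rank certificates must come from interactions among several two-edges, found by rank minimization over the whole Gram family, which is what the enumerations in the paper and in \cite{qc26} do.
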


The upper bound \(z_{2A}(4,4)\le11\) is obtained from the cell bound
\(\Z(4,4)=12\) by excluding \(R(G)=12\): the identity \eqref{eq:cell}
forces \((|E_1|,H)=(9,1)\) or \((8,0)\); the first is limited and
would give \(z_2(4,4)\ge12\), the second is disposed of by a finite
classification of the \(161\) orbits \cite{qc26}.

\subsection{The computed range and saturation}

In the range computed for \(n\le7\), the values are as follows.  Here
\(z_L\) is the first-order limited augmented Zarankiewicz number of
\cite{qcx26a,xyq26}, \(z_2\) the second-order number, \(z_{2A}\) the
second-order augmented number, and \(\Z\) the universal cell bound.

\begin{table}[h]
\centering
\begin{tabular}{ccccc}
\toprule
\((m,n)\) & \(z_L\) & \(z_2\) & \(z_{2A}\) & overtake?\\
\midrule
\((4,3)\) & \(8\) & \(8\) & \(8\) & no\\
\((4,4)\) & --- & \(10\) & \(11\) & \textbf{yes}\\
\((5,5)\) & \(15\) & \(17\) & \(17\) & no\\
\((6,4)\) & \(14\) & \(16\) & \(16\) & no\\
\((6,5)\) & --- & \(20\) & \(20\) & no\\
\((15,6)\) & --- & \(\Z\) & \(\Z\) & no\\
\((21,7)\) & --- & \(\Z\) & \(\Z\) & no\\
\bottomrule
\end{tabular}
\caption{Selected values in the computed range.  The only second-order
overtake is \((4,4)\).  At \((5,5)\) and \((6,4)\) the second-order
number is \(z_2(5,5)=17\) and \(z_2(6,4)=16\) by \cite{xyq26}, with
\(z_{2A}=z_2\); the first-order limited augmented number is strictly
smaller, a separation known from \cite{xyq26}.  At \((15,6)\) and
\((21,7)\) the cell bound is saturated by \cite{lq26}, so
\(z_{2A}=z_2\).  At \((6,5)\) the six-column value
\(z_2(6,5)=20\) of \cite[Theorem~1.4]{leb26a} (transposed) is
recovered in Section~\ref{sec:n=5}, where
Proposition~\ref{prop:n5-cases} gives
\(z_{2A}(6,5)=z_{RL}(6,5)=z_2(6,5)=20\).  This table is not exhaustive;
complete width-by-width data are given in
Sections~\ref{sec:defs}--\ref{sec:n=7}.}
\label{tab:range}
\end{table}

\begin{proposition}[Saturation implies no overtake]\label{prop:sat}
If \(z_2(m,n)=\Z(m,n)\), then \(z_{2A}(m,n)=z_2(m,n)\).
In particular, at \((15,6)\) and \((21,7)\), where \(z_2=\Z\) by
\cite{lq26}, no overtake occurs.
\end{proposition}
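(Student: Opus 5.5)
The proof is a sandwich between the hierarchy and the universal cell bound \eqref{eq:Z}. We will show that \(z_{2A}(m,n)\le \Z(m,n)\) holds for \emph{every} irreducible simple configuration, limited or not. Once this is in place, the hypothesis \(z_2(m,n)=\Z(m,n)\) together with \(z_2\le z_{2A}\) forces
\[
\Z(m,n)=z_2(m,n)\le z_{2A}(m,n)\le \Z(m,n),
\]
so \(z_{2A}(m,n)=z_2(m,n)\).

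The key step is the upper bound for an arbitrary configuration \(G\) admissible in the definition of \(z_{2A}\). Such a \(G\) is simple and irreducible, but its one-edge count is unconstrained. The cell identity \eqref{eq:cell} gives
\[
2R(G)=mn+|E_1(G)|-H(G)\le mn+|E_1(G)|.
\]
We then need \(|E_1(G)|\le z(m,n)\). Suppose instead the one-edge graph \(G_0\) contained a \(C_4\) on rows \(i,k\) and columns \(j,l\). Then the four squares \(x_i^2y_j^2,x_i^2y_l^2,x_k^2y_j^2,x_k^2y_l^2\) would sum to \((x_iy_j+x_ky_l)^2+(x_iy_l-x_ky_j)^2\), a representation by two squares. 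This lowers the SOS rank below \(R(G)\) and contradicts irreducibility. So \(G_0\) is \(C_4\)-free, hence \(|E_1|\le z(m,n)\). This gives \(R(G)\le (mn+z)/2\), and since \(R(G)\) is an integer, \(R(G)\le\Z(m,n)\). This is exactly the inequality \(z_{2A}\le\Z\) already recorded in \eqref{eq:Z}, so in the write-up we can simply cite \eqref{eq:Z} and keep the \(C_4\) argument as the justification.

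For the particular orders, we invoke the saturation results \(z_2(15,6)=\Z(15,6)\) and \(z_2(21,7)=\Z(21,7)\) of \cite{lq26}, listed in Table~\ref{tab:range}, and apply the general statement.

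The only potential obstacle is confirming that the non-limited \(z_{2A}\) is really dominated by \(\Z\), that is, that irreducibility alone excludes one-edge \(C_4\)'s. The two-square identity above settles this. Everything else is the inequality chain.
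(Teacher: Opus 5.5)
Your proof is correct and is the paper's argument: the hierarchy \(z_2\le z_{2A}\le \Z\) from \eqref{eq:Z} collapses to equality once \(z_2=\Z\), and the particular orders then follow from the saturation results of \cite{lq26}. Your two-square identity for a one-edge \(C_4\) validly justifies \(|E_1|\le z(m,n)\) for non-limited irreducible configurations, a step the paper leaves implicit when it derives \eqref{eq:Z} from the cell identity.
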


\begin{proof}
The hierarchy \eqref{eq:Z} gives \(z_2\le z_{2A}\le\Z\).  If
\(z_2=\Z\), then both inequalities are equalities, so \(z_{2A}=z_2\).
\end{proof}

Thus, in the computed range,
\(z_{2A}(m,n)=z_2(m,n)\) at every order other than \((4,4)\), where
the overtake occurs. 

\subsection{The hole-budget theorem}

\begin{observation}\label{obs:identity}
Let \(G\) be a limited extremal configuration with \(|E_1|=z(m,n)\)
and \(H(G)=H_{\mathrm{limited}}(m,n)\).  If \(G'\) is any simple
\(C_4\)-free configuration with \(R(G')=R(G)+1\), then, writing
\(d=z(m,n)-|E_1'|\) and \(H'=H(G')\), the cell identity gives
\begin{equation}\label{eq:deficit}
H'=H_{\mathrm{limited}}-2-d.
\end{equation}
\end{observation}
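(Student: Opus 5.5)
The identity \eqref{eq:deficit} follows from applying the cell identity \eqref{eq:cell} twice and subtracting. This is legitimate because both configurations are simple, which is all \eqref{eq:cell} needs. Irreducibility and the \(C_4\)-freeness of \(G'\) play no role; they are hypotheses for later use.

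First, apply \eqref{eq:cell} to the limited extremal configuration \(G\). Since \(|E_1(G)|=z(m,n)\), \(R(G)=z_2(m,n)\) and \(H(G)=H_{\mathrm{limited}}(m,n)\), we get
\[
2R(G)=mn+z(m,n)-H_{\mathrm{limited}}.
\]
This is just the displayed relation \(2z_2=mn+z-H_{\mathrm{limited}}\) already recorded after Remark~\ref{rem:cert}.

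Second, apply \eqref{eq:cell} to \(G'\). Substituting \(|E_1'|=z(m,n)-d\) gives
\[
2R(G')=mn+z(m,n)-d-H'.
\]

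Third, subtract the first equation from the second and use \(R(G')-R(G)=1\):
\[
2=H_{\mathrm{limited}}-d-H',
\]
which rearranges to \(H'=H_{\mathrm{limited}}-2-d\). More generally, for \(R(G')=R(G)+k\) the same subtraction yields the identity \(2(R(G')-z_2)=H_{\mathrm{limited}}-H(G')-d(G')\) stated in the introduction; the observation is the case \(k=1\).

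There is no real obstacle here. The only point to check is that \(H_{\mathrm{limited}}\) is well defined, i.e. independent of which limited extremal configuration is chosen. It is: any such configuration satisfies \(|E_1|=z(m,n)\) and \(R=z_2(m,n)\), so \eqref{eq:cell} fixes its hole count.
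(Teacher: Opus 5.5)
Your proof is correct and matches the paper's own argument: apply the cell identity \eqref{eq:cell} to \(G\) and to \(G'\), subtract, and use \(R(G')-R(G)=1\). Your side remarks are also accurate: only simplicity is needed, and the general-\(k\) version is the budget identity \eqref{eq:budget}.
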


\begin{proof}
Subtract \(2R(G)=mn+z(m,n)-H_{\mathrm{limited}}\) from
\(2(R(G)+1)=mn+|E_1'|-H'\) and rearrange.
\end{proof}

\begin{theorem}[Hole budget of an overtake]\label{thm:budget}
Let \(G\) be a simple configuration with \(C_4\)-free one-edge graph
on the \(m\times n\) grid, and write \(d(G)=z(m,n)-|E_1(G)|\) for
its one-edge deficit.  Then
\begin{equation}\label{eq:budget}
2\bigl(R(G)-z_2(m,n)\bigr)=H_{\mathrm{limited}}(m,n)-H(G)-d(G).
\end{equation}
Consequently, for \(H_{\mathrm{limited}}(m,n)\ge1\),
\begin{equation}\label{eq:half}
z_2(m,n)\le z_{2A}(m,n)\le z_2(m,n)
+\left\lfloor\frac{H_{\mathrm{limited}}(m,n)-1}{2}\right\rfloor ,
\end{equation}
and every \emph{irreducible} configuration with \(R(G)>z_2(m,n)\)
is non-limited, has \(d(G)\ge1\), and satisfies
\begin{equation}\label{eq:three}
H(G)\le H_{\mathrm{limited}}(m,n)-3 .
\end{equation}
In particular an overtake forces \(H_{\mathrm{limited}}(m,n)\ge3\).
\end{theorem}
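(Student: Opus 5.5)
The plan is to derive everything from the cell identity \eqref{eq:cell}, applied twice: once to $G$ and once to a limited extremal configuration. I would first note that $H_{\mathrm{limited}}(m,n)$ is well defined. Every limited extremal configuration has $|E_1|=z(m,n)$ and $R=z_2(m,n)$, so \eqref{eq:cell} gives $H_{\mathrm{limited}}=mn+z(m,n)-2z_2(m,n)$, independently of the choice of configuration.

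\emph{Step 1: the identity \eqref{eq:budget}.} For the given simple $G$, \eqref{eq:cell} reads $2R(G)=mn+|E_1(G)|-H(G)$. I subtract $2z_2(m,n)=mn+z(m,n)-H_{\mathrm{limited}}(m,n)$ from it. This gives $2(R(G)-z_2)=|E_1(G)|-z(m,n)+H_{\mathrm{limited}}-H(G)$, which is \eqref{eq:budget} once $d(G)=z(m,n)-|E_1(G)|$ is substituted. This is the same computation as in Observation~\ref{obs:identity}, with a general gain in place of $+1$.

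\emph{Step 2: consequences for an irreducible $G$ with $R(G)>z_2$.} Because the one-edge graph of $G$ is $C_4$-free, $|E_1(G)|\le z(m,n)$, so $d(G)\ge0$.

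\begin{itemize}
\item \emph{$G$ is non-limited.} If $d(G)=0$, then $G$ is simple, has $C_4$-free one-edge graph and $|E_1|=z(m,n)$, so it is limited. Being irreducible, it is admissible in the maximum defining $z_2$, which forces $R(G)\le z_2$, a contradiction. Hence $d(G)\ge1$.
\item \emph{The bound \eqref{eq:three}.} Since $R(G)-z_2\ge1$, \eqref{eq:budget} gives $H_{\mathrm{limited}}-H(G)-d(G)\ge2$. Therefore $H(G)\le H_{\mathrm{limited}}-2-d(G)\le H_{\mathrm{limited}}-3$.
\item \emph{The overtake threshold.} Since $H(G)\ge0$, an overtake forces $H_{\mathrm{limited}}\ge3$.
\end{itemize}

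\emph{Step 3: the budget \eqref{eq:half}.} The lower bound $z_2\le z_{2A}$ holds because limited irreducible configurations are among those admitted in $z_{2A}$. For the upper bound, take an irreducible $G$ attaining $z_{2A}$. If $R(G)\le z_2$ there is nothing to prove; the hypothesis $H_{\mathrm{limited}}\ge1$ is only needed here, to make the floor nonnegative. Otherwise Step 2 gives $d(G)\ge1$, and with $H(G)\ge0$, \eqref{eq:budget} yields $2(R(G)-z_2)\le H_{\mathrm{limited}}-1$. Integrality of $R(G)-z_2$ then gives the floor.

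The step I expect to need the most care is not the algebra but the admissibility bookkeeping. First, $d(G)\ge0$ requires the class defining $z_{2A}$ to keep simplicity and the $C_4$-free one-edge graph, which the statement assumes. Second, the case $d(G)=0$ must fall under the definition of $z_2$ exactly, so that limitedness plus irreducibility really does bound $R(G)$ by $z_2$.
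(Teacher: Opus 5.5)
Your proof is correct and follows the paper's argument essentially step for step: subtract the cell identity for a limited extremal configuration from that for $G$, use the definition of $z_2$ to force $d(G)\ge1$ for an irreducible $G$ with $R(G)>z_2$, and read off \eqref{eq:three} and \eqref{eq:half} from the resulting identity together with evenness and integrality. On the admissibility point you flag, irreducibility already forces the one-edge graph to be $C_4$-free: the four one-edge terms of a $4$-cycle on rows $i,k$ and columns $j,l$ sum to $(x_iy_j+x_ky_l)^2+(x_iy_l-x_ky_j)^2$, which would drop the rank below $R(G)$, so $d(G)\ge0$ holds automatically for every configuration counted by $z_{2A}$.
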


\begin{proof}
Apply the cell identity \eqref{eq:cell} to \(G\) and to a limited
extremal configuration { \(G_L\) , for which
\(|E_1(G_L)|=z(m,n)\), \(H(G_L)=H_{\mathrm{limited}}(m,n)\) and
\(2z_2(m,n)=mn+z(m,n)-H_{\mathrm{limited}}(m,n)\).  Subtracting the
second identity from the first gives \eqref{eq:budget}, since
\(|E_1|-z(m,n)=-d(G)\).
If \(R(G)>z_2(m,n)\), then the right-hand side of
\eqref{eq:budget} is a positive even integer.  If in addition \(G\)
is irreducible, then \(G\) cannot be limited, since a limited
irreducible configuration has \(R(G)\le z_2(m,n)\) by the definition
of \(z_2\); hence \(d(G)\ge1\), and we obtain by \eqref{eq:budget}
that
\[H_{\mathrm{limited}}-H(G)=2(R(G)-z_2)+d(G)\ge2+d(G)\ge3,\] which is
\eqref{eq:three} and thus \(H_{\mathrm{limited}}\ge3\).
In particular,
if \(R(G)=z_{2A}>z_2,\) then (such a \(G\) is irreducible, being
counted by \(z_{2A}\))
\(H(G)+d(G)\ge 1\) , and by   \eqref{eq:budget} we have
\[2(z_{2A}-z_2)=H_{\mathrm{limited}}-H(G)-d(G)\le H_{\mathrm{limited}}-1.\]
 Hence}
\(z_{2A}-z_2\le\lfloor(H_{\mathrm{limited}}-1)/2\rfloor\), which is
\eqref{eq:half}.

Irreducibility cannot be dropped from the statement: on the
\(4\times4\) grid a limited configuration with nine \(1\)-edges,
three \(2\)-edges and one hole has \(R(G)=12>z_2(4,4)=10\) and
\(d(G)=0\), and it is necessarily reducible, since irreducibility
would give \(z_2(4,4)\ge12\).
\end{proof}

\begin{theorem}[Low-hole reduction]\label{thm:low-hole}
Let \(h_{\mathrm{RL}}(m,n)=mn+z(m,n)-2z_{RL}(m,n)\).  If
\(h_{\mathrm{RL}}(m,n)\le2\), then \(z_{2A}(m,n)=z_2(m,n)\).
\end{theorem}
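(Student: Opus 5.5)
The plan is to combine the chain \(H_{\mathrm{limited}}\le h_{\mathrm{RL}}\) with the hole-budget Theorem~\ref{thm:budget}.

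First, since \(z_{RL}(m,n)\le z_2(m,n)\) by the hierarchy, the two cell identities \(2z_2=mn+z-H_{\mathrm{limited}}\) and \(2z_{RL}=mn+z-h_{\mathrm{RL}}\) give
\[
H_{\mathrm{limited}}(m,n)=h_{\mathrm{RL}}(m,n)-2\bigl(z_2(m,n)-z_{RL}(m,n)\bigr)\le h_{\mathrm{RL}}(m,n)\le 2 .
\]
This uses only \(z_2\ge z_{RL}\) and the definitions, and it is already noted after the definition of \(H_{\mathrm{limited}}\).

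Second, I would apply Theorem~\ref{thm:budget}. If \(z_{2A}(m,n)>z_2(m,n)\), there is an irreducible configuration \(G\) with \(R(G)=z_{2A}>z_2\), and \eqref{eq:three} forces \(H_{\mathrm{limited}}\ge3\). This contradicts \(H_{\mathrm{limited}}\le2\). Equivalently, \eqref{eq:half} gives \(z_{2A}-z_2\le\lfloor (H_{\mathrm{limited}}-1)/2\rfloor\le\lfloor 1/2\rfloor=0\) whenever \(H_{\mathrm{limited}}\ge1\). The case \(H_{\mathrm{limited}}=0\) is handled by the \(H_{\mathrm{limited}}\ge3\) conclusion, or directly by saturation: \(H_{\mathrm{limited}}\in\{0,1\}\) means \(z_2=\Z\), and Proposition~\ref{prop:sat} applies. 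Together with \(z_{2A}\ge z_2\), this gives equality.

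The main subtlety, rather than an obstacle, is applying Theorem~\ref{thm:budget} to the configuration realizing \(z_{2A}\). That theorem assumes a simple configuration with \(C_4\)-free one-edge graph, so I must confirm that configurations counted by \(z_{2A}\) satisfy these hypotheses. In particular, \(|E_1|\le z(m,n)\) must hold so that \(d(G)\ge0\). I would read this off the definition of \(z_{2A}\) in \cite{qc26}, which drops only the constraint \(|E_1|=z\). If that definition allowed a non-\(C_4\)-free \(E_1\), the step would need an extra argument that such \(E_1\) makes \(P_G\) reducible; I would check this first.
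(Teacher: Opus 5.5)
Your proof is correct and is essentially the paper's argument. The paper applies the cell identity directly at the \(z_{RL}\) level, \(2(z_{2A}-z_{RL})=h_{\mathrm{RL}}-H(G)-d(G)\ge2\), which forces \(H(G)=d(G)=0\) and hence a limited irreducible \(G\) with \(R(G)>z_2\); you route the same computation through \(H_{\mathrm{limited}}\le h_{\mathrm{RL}}\) and \eqref{eq:three} of Theorem~\ref{thm:budget}. The hypothesis you flag is tacitly used by the paper too, and it holds for every irreducible \(G\) whatever the precise wording in \cite{qc26}: a \(C_4\) on rows \(i,k\) and columns \(j,l\) inside \(E_1\) satisfies \(x_i^2y_j^2+x_i^2y_l^2+x_k^2y_j^2+x_k^2y_l^2=(x_iy_j+x_ky_l)^2+(x_iy_l-x_ky_j)^2\), which would make \(P_G\) reducible, so \(|E_1|\le z(m,n)\), i.e. \(d(G)\ge0\).
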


\begin{proof}
Let \(G\) be a configuration counted by \(z_{2A}(m,n)\), so \(G\) is
irreducible and \(R(G)=z_{2A}(m,n)\).  The cell identity gives
\[
2\bigl(z_{2A}(m,n)-z_{RL}(m,n)\bigr)
=h_{\mathrm{RL}}(m,n)-H(G)-d(G),
\]
as in \eqref{eq:budget} with \(z_2\) replaced by \(z_{RL}\).  Suppose
\(z_{2A}(m,n)>z_2(m,n)\).  The left-hand side is then a positive even
integer, so \(h_{\mathrm{RL}}-H-d\ge2\).  Since \(H,d\ge0\) and
\(h_{\mathrm{RL}}\le2\), this forces \(h_{\mathrm{RL}}=2\) and
\(H(G)=d(G)=0\).  In particular \(G\) is limited (\(d(G)=0\)) and
irreducible (as it is counted by \(z_{2A}\)), so \(R(G)\le z_2(m,n)\),
contradicting \(R(G)=z_{2A}(m,n)>z_2(m,n)\).  Hence
\(z_{2A}(m,n)=z_2(m,n)\).
\end{proof}

\begin{remark}[The unit move and its cost]\label{rem:cost}
A change of configuration that raises the displayed length by one
while keeping simplicity is constrained by the cell identity only: it
satisfies \(\Delta|E_1|+\Delta|E_2|=1\) and
\(\Delta H=\Delta|E_1|-2\), that is
\[
(\Delta|E_1|,\Delta|E_2|,\Delta H)=(t,1-t,t-2),\qquad t\in\mathbb Z ,
\]
and simplicity does not single out the value \(t=-1\).  That value is
the move which increases the one-edge deficit: a \(1\)-edge is deleted,
freeing one cell, and two \(2\)-edges are added, occupying four.  Its
cost is three holes, and it is the move that produces the \((4,4)\)
overtake.  Iterating it at three holes per unit of excess is the
statement of Conjecture~\ref{conj:budget}.
\end{remark}

\begin{corollary}\label{cor:budget}
Let \(\mu(m,n)=\min\{H(G)+d(G)\}\) over the configurations counted
by \(z_{2A}(m,n)\).  Then
\[
z_{2A}(m,n)\le z_2(m,n)
+\left\lfloor\frac{H_{\mathrm{limited}}(m,n)}{3}\right\rfloor\]
\[\iff
\mu(m,n)\ge H_{\mathrm{limited}}(m,n)
-2\left\lfloor\frac{H_{\mathrm{limited}}(m,n)}{3}\right\rfloor ,
\]
and, by the parity constraint \(\mu\equiv H_{\mathrm{limited}}\pmod2\),
this is equivalent to \(\mu(m,n)\ge\lceil H_{\mathrm{limited}}(m,n)/3\rceil\).
The inequality on the left holds whenever
\(H_{\mathrm{limited}}(m,n)\in\{0,1,2,3,4,6\}\).  For
\(H_{\mathrm{limited}}(m,n)\in\{5,7\}\) it is equivalent to the
non-attainment of the universal cell bound, i.e.,
\(z_{2A}(m,n)<\Z(m,n)\).  At \((4,4)\) this is the finite exclusion
of the value \(12\) in \cite[Theorem~3.1]{qc26}.
\end{corollary}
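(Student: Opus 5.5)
The plan is to rewrite everything through the identity \eqref{eq:budget} of Theorem~\ref{thm:budget}. Write $h=H_{\mathrm{limited}}(m,n)$. If there is no overtake, both sides of the first equivalence hold trivially. Indeed, the left side is $0\le\lfloor h/3\rfloor$, and the right side follows because a limited extremal configuration has $H+d=h$; and if some counted configuration had $H+d<h$, then \eqref{eq:budget} would give $R(G)>z_2$, which is an overtake. So assume first that an overtake occurs. For any $G$ counted by $z_{2A}$, \eqref{eq:budget} reads $2(z_{2A}-z_2)=h-(H(G)+d(G))$. The left side is the same for every counted $G$, so $H(G)+d(G)=\mu$ is in fact constant on counted configurations, and $2(z_{2A}-z_2)=h-\mu$. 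Hence $z_{2A}-z_2\le\lfloor h/3\rfloor$ is equivalent to $h-\mu\le 2\lfloor h/3\rfloor$, which is the displayed equivalence. The same formula shows $\mu\equiv h\pmod 2$, and this covers the no-overtake case as well, where $\mu=h$.

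Next I convert $\mu\ge h-2\lfloor h/3\rfloor$ into $\mu\ge\lceil h/3\rceil$ by writing $h=3q+r$ with $r\in\{0,1,2\}$. Then $h-2\lfloor h/3\rfloor=q+r$, while $\lceil h/3\rceil$ equals $q$ if $r=0$ and $q+1$ otherwise. The two thresholds agree for $r\in\{0,1\}$. For $r=2$ they are $q+2$ and $q+1$, and $q+1\equiv h+1\pmod 2$ because $h=3q+2$, so $q+1$ has the wrong parity. Since $\mu\equiv h\pmod 2$, the condition $\mu\ge q+1$ therefore already forces $\mu\ge q+2$. So the two conditions coincide.

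For the unconditional cases I use Theorem~\ref{thm:budget}, which gives $z_{2A}-z_2\le\lfloor(h-1)/2\rfloor$ (for $h=0$, simply $z_{2A}=z_2$). I compare this with $\lfloor h/3\rfloor$ one value at a time:
\begin{itemize}
\item for $h=0,1,2$ the budget bound is $0$;
\item for $h=3,4$ it is $1=\lfloor h/3\rfloor$;
\item for $h=6$ it is $\lfloor 5/2\rfloor=2=\lfloor 6/3\rfloor$.
\end{itemize}
So the conjectured inequality holds in all these cases.

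For $h=5$ the budget bound is $2$ and $\lfloor 5/3\rfloor=1$. For $h=7$ the budget bound is $3$ and $\lfloor 7/3\rfloor=2$. In both cases the inequality fails exactly when $z_{2A}-z_2$ equals $(h-1)/2$, that is, when $\mu=1$. By the cell identity, $2z_2=mn+z-h$ with $h$ odd gives $\Z=\lfloor(mn+z)/2\rfloor=z_2+(h-1)/2$. So failure is exactly $z_{2A}=\Z$. At $(4,4)$ we have $h=5$, $z_2=10$ and $\Z=12$, so the statement becomes the exclusion of $R=12$ from \cite[Theorem~3.1]{qc26}. The only step needing care is the parity argument in the case $r=2$; everything else is bookkeeping with \eqref{eq:budget}.
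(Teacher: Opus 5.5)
Your proposal is correct and follows essentially the same route as the paper. Both arguments rest on the budget identity $2(z_{2A}-z_2)=H_{\mathrm{limited}}-\mu$, the decomposition $H_{\mathrm{limited}}=3q+r$ with the parity argument for $r=2$, and the fact that $\mu\ge1$ when $H_{\mathrm{limited}}\ge1$; you import that fact through the half-budget bound \eqref{eq:half}, whereas the paper re-derives it. The only other difference is at $H_{\mathrm{limited}}\in\{5,7\}$. There you identify failure directly as $z_{2A}=z_2+(H_{\mathrm{limited}}-1)/2=\Z(m,n)$, which gives the equivalence in both directions at once, while the paper splits $\mu=1$ into the cases $(d,H)=(0,1)$ and $(1,0)$.
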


\begin{proof}
Write \(H=H_{\mathrm{limited}}(m,n)\) and recall that, by the budget
identity \eqref{eq:budget},
\[
2(z_{2A}(m,n)-z_2(m,n))=H-\mu(m,n),
\]
so the left side is even and \(\mu\equiv H\pmod 2\).

\emph{Case \(H=0\).}  Then \(z_2(m,n)=\Z(m,n)\), so
Proposition~\ref{prop:sat} gives \(z_{2A}=z_2\), and the sharp bound
and the assertion \(\mu\ge\lceil H/3\rceil=0\) hold trivially.

\emph{Case \(H\ge 1\).}  We first show \(\mu\ge 1\).  Suppose
\(\mu=0\), so there is a configuration \(G\) counted by
\(z_{2A}(m,n)\) with \(H(G)=d(G)=0\).  Such a \(G\) is limited (as
\(d(G)=0\)) and hole-free.  The budget identity gives
\(2(z_{2A}-z_2)=H\ge 1\), so \(z_{2A}>z_2\); but a limited
irreducible configuration has displayed length at most \(z_2\),
contradiction.  Hence \(\mu\ge 1\).

Given \(\mu\ge 1\) and \(\mu\equiv H\pmod 2\), the smallest
parity-admissible value of \(\mu\) is \(1\) if \(H\) is odd and
\(2\) if \(H\) is even.

The sharp bound \(z_{2A}\le z_2+\lfloor H/3\rfloor\) is equivalent,
via the budget identity, to
\[
\mu\ge H-2\lfloor H/3\rfloor .
\]
Writing \(H=3q+r\) with \(r\in\{0,1,2\}\), this reads \(\mu\ge q+r\).
Under the parity constraint \(\mu\equiv H\pmod 2\), the conditions
\(\mu\ge q+r\) and \(\mu\ge\lceil H/3\rceil=q+\lceil r/2\rceil\) are
equivalent: for \(r=0,1\) they coincide; for \(r=2\), the parity
constraint forces \(\mu\ge q+2\) and \(\mu\ge q+1\) to have the same
content.

Consequently the sharp bound holds whenever the smallest
parity-admissible value of \(\mu\) already satisfies the condition:
for \(H\in\{1,2,3,4,6\}\) this is the case, and for \(H=0\) it is
trivial.  For \(H\in\{5,7\}\), the smallest parity-admissible
\(\mu=1\) violates \(\mu\ge\lceil H/3\rceil\), so the sharp bound is
equivalent to \(\mu\ge 2\), i.e., to ruling out \(\mu=1\).

The case \(\mu=1\) splits into \(d(G)=0\), \(H(G)=1\) or
\(d(G)=1\), \(H(G)=0\).

If \(d(G)=0\), \(G\) is limited with length \(z_{2A}\).  The budget
identity gives \(2(z_{2A}-z_2)=H-1\ge 4\), so \(z_{2A}\ge z_2+2>z_2\),
contradicting that a limited irreducible configuration has displayed
length at most \(z_2\).  Hence \((d,H)=(0,1)\) is impossible.

If \(d(G)=1\) and \(H(G)=0\), the cell identity gives
\(2z_{2A}=mn+(z(m,n)-1)-0=mn+z(m,n)-1\).  Since \(H\) is odd (as
\(H\in\{5,7\}\)), \(mn+z(m,n)\) is odd, so \(\Z(m,n)=(mn+z(m,n)-1)/2\)
and therefore \(z_{2A}=\Z(m,n)\).

Hence for \(H\in\{5,7\}\) the sharp bound is equivalent to the
non-attainment of the universal cell bound, i.e., to \(z_{2A}<\Z\).

At \((4,4)\) one has \(H=5\) and \(z_{2A}(4,4)=11<\Z(4,4)=12\), so
the sharp bound holds; the exclusion of \(R(G)=12\) is the finite
classification of \cite[Theorem~3.1]{qc26}.
\end{proof}

\begin{conjecture}[Sharp hole budget]\label{conj:budget}
For all \(m,n\ge2\),
\begin{equation}\label{eq:third}
z_{2A}(m,n)\le z_2(m,n)
+\left\lfloor\frac{H_{\mathrm{limited}}(m,n)}{3}\right\rfloor ,
\end{equation}
equivalently, every configuration counted by \(z_{2A}(m,n)\)
satisfies
\[
H(G)+d(G)\ge H_{\mathrm{limited}}(m,n)
-2\left\lfloor\frac{H_{\mathrm{limited}}(m,n)}{3}\right\rfloor ,
\]
which is equivalent, under the parity constraint, to
\(H(G)+d(G)\ge\lceil H_{\mathrm{limited}}(m,n)/3\rceil\).
The bound is sharp at \((4,4)\), where \(H_{\mathrm{limited}}=5\) and
\(z_{2A}-z_2=1\).
\end{conjecture}

\begin{conjecture}[Threshold]\label{conj:threshold}
{ For \(\min(m,n)\ge3\), an overtake occurs if and only if \(H_{\mathrm{limited}}(m,n)\ge5\).}

\end{conjecture}
{As verified in the remaining part of this paper, in the computed range \(n\le7\), \((m,n)=(4,4)\) is the unique case with
\(H_{\mathrm{limited}}=5\) accompanied with an overtake.  At the width-two grid \((m,n)=(6,2)\), one has \(H_{\mathrm{limited}}=5\) while no overtake occurs, by \(\mathrm{BSR}(m,2)=z_2(m,2)=m+1\) of \cite{lq26}.}
\subsection{An almost-everywhere cell bound}

\begin{proposition}[Conditional]\label{prop:ae}
Assume \(H_{\mathrm{limited}}(m,n)\le4\) for every \((m,n)\ne(4,4)\)
with \(\min(m,n)\ge3\); the hypothesis fails at the degenerate width
two, where \(H_{\mathrm{limited}}(6,2)=5\).  Then
\[
z_{2A}(m,n)\ge\Z(m,n)-2\qquad(\min(m,n)\ge3),
\]
and at \((4,4)\) one has \(z_{2A}(4,4)=11=\Z(4,4)-1\).
\end{proposition}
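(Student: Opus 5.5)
The argument is a short chain through the definitions. We use the hierarchy $z_{2A}\ge z_2$ and the identity $2z_2(m,n)=mn+z(m,n)-H_{\mathrm{limited}}(m,n)$, which compares $z_2$ directly with the universal cell bound $\Z(m,n)=\lfloor (mn+z(m,n))/2\rfloor$.

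First, express the gap $\Z-z_2$ in terms of $H:=H_{\mathrm{limited}}(m,n)$. Write $N=mn+z(m,n)$. The identity gives $z_2=(N-H)/2$, so $H\equiv N\pmod 2$. Hence
\[
\Z-z_2=\left\lfloor\frac{N}{2}\right\rfloor-\frac{N-H}{2}=\left\lfloor\frac{H}{2}\right\rfloor ,
\]
using that $N$ and $H$ have the same parity. (If $N$ is even then $H$ is even and the gap is $H/2$; if $N$ is odd then $H$ is odd and the gap is $(H-1)/2$.)

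Second, apply the hypothesis. For $\min(m,n)\ge3$ and $(m,n)\ne(4,4)$ we have $H\le4$, so $\Z-z_2\le2$. Combined with $z_{2A}\ge z_2$ from the hierarchy \eqref{eq:Z}, this gives $z_{2A}\ge z_2\ge\Z-2$. This step uses nothing about overtake; it is simply the lower bound $z_{2A}\ge z_2$.

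Third, the point $(4,4)$ is excluded from the hypothesis, since $H_{\mathrm{limited}}(4,4)=5$ gives only $z_2=\Z-2$ there. We instead invoke Theorem~\ref{thm:44}: $z_{2A}(4,4)=11$, while $\Z(4,4)=\lfloor(16+9)/2\rfloor=12$. So $z_{2A}(4,4)=\Z(4,4)-1\ge\Z-2$ as well, and the stated value follows.

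The only point needing care is the parity bookkeeping in the first step. We take $H_{\mathrm{limited}}$ to be well defined by the identity $2z_2=N-H$, so it is independent of which limited extremal configuration is chosen. Everything else is immediate from earlier results, so there is no substantive obstacle beyond the conditional hypothesis itself.
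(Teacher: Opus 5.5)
Your proposal is correct and follows the same route as the paper. Both apply the cell identity to a limited extremal configuration to get $2z_2=mn+z(m,n)-H_{\mathrm{limited}}$, deduce $z_2\ge\Z-2$ from $H_{\mathrm{limited}}\le4$, transfer the bound via $z_{2A}\ge z_2$, and read off $(4,4)$ from $z_{2A}(4,4)=11$ and $\Z(4,4)=12$. Your parity computation $\Z-z_2=\lfloor H_{\mathrm{limited}}/2\rfloor$ only makes explicit a step the paper leaves implicit, and it is not needed, since $(N-H)/2\ge N/2-2\ge\lfloor N/2\rfloor-2$ directly.
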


\begin{proof}
The cell identity \eqref{eq:cell} applied to a limited extremal
configuration gives
\(2z_2(m,n)= mn+z(m,n)-H_{\mathrm{limited}}(m,n)\). Thus
\(z_2(m,n)\ge\Z(m,n)-2\) whenever \(H_{\mathrm{limited}}(m,n)\le4\).
Since \(z_{2A}\ge z_2\), the same lower bound holds for \(z_{2A}\),
for every \((m,n)\) with \(\min(m,n)\ge3\).  At \((4,4)\),
\(\Z(4,4)=12\) and \(z_{2A}(4,4)=11\).
\end{proof}

\subsection{The degenerate widths two and three}\label{subsec:n2n3}

\smallskip\noindent\textbf{Width two.}  For \(m\ge2\) , one has
\(z_{RL}(m,2)=z(m,2)=m+1\), hence
\[
h_{\mathrm{RL}}(m,2)=2m+(m+1)-2(m+1)=m-1,
\]
which grows with \(m\).  No overtake occurs at width two: by
\cite{lq26}, \(\mathrm{BSR}(m,2)=z_2(m,2)=z_{RL}(m,2)=z(m,2)=m+1\),
and \(z_{2A}\le\mathrm{BSR}\) leaves no room for an excess.  The
threshold formulation of Conjecture~\ref{conj:threshold} therefore
fails at width two {since \(h_{\mathrm{RL}}(6,2)=5\), while no
overtake occurs.  It remains to check with \(\min(m,n)\ge3\).}

\smallskip\noindent\textbf{Width three.}  For \(m\ge3\), one has
\(z(m,3)=m+3\) and \(z_2(m,3)=2m\), hence
\[
h_{\mathrm{RL}}(m,3)=3m+(m+3)-2\cdot2m=3.
\]
The cell identity implies
\[2(R-z_{RL})=h_{\mathrm{RL}}-d-H,\]
and by
Theorem~\ref{thm:budget} an overtake requires
\(h_{\mathrm{RL}}\ge3\).  With \(h_{\mathrm{RL}}=3\) the admissible
triples are \((d,H,R-z_{RL})\in\{(1,0,1),(0,1,1)\}\).  The second
triple is excluded at once: a limited configuration with \(H=1\) and
\(R=z_{RL}+1=2m+1\) would, if irreducible, be counted by \(z_2\),
giving \(z_2(m,3)\ge2m+1\), contrary to the exact value
\(z_2(m,3)=z_{RL}(m,3)=2m\) of \cite{lq26}.  The first triple is
non-limited, with \(|E_1|=m+2\), \(|E_2|=m-1\) and \(H=0\): it is
determined by a \(C_4\)-free base with \(m+2\) edges together with a
partition of the remaining \(2m-2\) cells into \(m-1\) two-edges.
Unlike the candidate lists of Sections~\ref{sec:n=4}--\ref{sec:n=7},
this family is infinite, and it is closed by exhibiting for every
\(m\ge3\) a feasible positive semidefinite Gram matrix of rank at most
\(2m\), i.e. a representation of \(P_G\) by at most \(2m\) squares of
bilinear forms (Remark~\ref{rem:cert}).  Hence
\[
z_{2A}(m,3)=z_{RL}(m,3)=z_2(m,3)=2m\qquad(m\ge3),
\]
and no overtake occurs at width three.




\section{Width \(n=4\)}\label{sec:n=4}

For width \(n=4\), the recursive-line values \(z_{RL}(m,4)\) are
determined for \(2\le m\le20\) in \cite{cc26}, except that
\(z_{RL}(14,4)\) is known only through the bracket
\(37\le z_{RL}(14,4)\le38\). Lebedev \cite{leb26a} has since proved
the actual-SOS optimality at \(m=9\) and \(m=10\)
\cite[Theorem~6.3]{leb26a}.  The corresponding recursive-line hole
counts
\[
h_{\mathrm{RL}}(m,4)=4m+z(m,4)-2z_{RL}(m,4)
\]
at the supplied witnesses are as follows.

\begin{table}[h]
\centering
\begin{tabular}{r|r|r}
\toprule
\(m\) & \(z_{RL}(m,4)\) & \(h_{\mathrm{RL}}(m,4)\)\\
\midrule
\(4\)  & \(10\)       & \(\mathbf{5}\)\\
\(5\)  & \(13\)       & \(4\)\\
\(6\)  & \(16\)       & \(4\)\\
\(7\)  & \(19\)       & \(3\)\\
\(8\)  & \(21\)       & \(4\)\\
\(9\)  & \(24\)       & \(3\)\\
\(10\) & \(27\)       & \(2\)\\
\(11\) & \(29\)       & \(3\)\\
\(12\) & \(32\)       & \(2\)\\
\(13\) & \(35\)       & \(1\)\\
\(14\) & \([37,38]\)  & \(2\)\\
\(15\) & \(40\)       & \(1\)\\
\(16\) & \(43\)       & \(0\)\\
\(17\) & \(45\)       & \(1\)\\
\(18\) & \(48\)       & \(0\)\\
\(19\) & \(50\)       & \(1\)\\
\(20\) & \(53\)       & \(0\)\\
\bottomrule
\end{tabular}
\caption{Recursive-line values and hole counts at the witnesses of
\cite{cc26} for width \(n=4\).  At \(m=14\) the value
\(z_{RL}(14,4)\) is the interval \([37,38]\) of \cite{cc26}, and the
hole count is that of the lower-bound witness.}
\label{tab:n=4}
\end{table}

Three features stand out.

First, \(h_{\mathrm{RL}}(4,4)=5\), and \((4,4)\) is the only entry
with \(h_{\mathrm{RL}}=5\) in the entire computed range
\(4\le m\le20\).  It is also the only overtake in the range, with
\(z_{2A}(4,4)=11>10=z_2(4,4)\) by Theorem~\ref{thm:44}.

Second, at every \(m\in\{10,12,13,14\}\cup[15,\infty)\), one has
\(h_{\mathrm{RL}}(m,4)\le2\): at \(m=14\) the two possibilities are
\(h_{\mathrm{RL}}\in\{0,2\}\) because \(z_{RL}(14,4)\in[37,38]\).
By Theorem~\ref{thm:low-hole},
\(z_{2A}(m,4)=z_2(m,4)\) at all these orders, so no overtake occurs
there.  At \(m=13\) and every \(m\ge15\), where \(h_{\mathrm{RL}}\le1\),
the cell identity forces \(z_2(m,4)=z_{RL}(m,4)\); at \(m=10\) the same
equality is that of \cite[Theorem~6.3]{leb26a}, while at \(m=12\) and
\(m=14\) it remains open.

Third, the remaining orders are \(m=5,6,7,8,9,11\), where
\(h_{\mathrm{RL}}\in\{3,4\}\).  At these orders we prove directly
that \(z_{2A}(m,4)=z_{RL}(m,4)\).

\begin{proposition}\label{prop:n4-cases}
For \(m=5,6,7,8,9,11\),
\[
z_{2A}(m,4)=z_{RL}(m,4)=z_2(m,4).
\]
\end{proposition}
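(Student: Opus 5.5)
The plan is to run the cell identity relative to \(z_{RL}\) rather than \(z_2\). Fix \(m\in\{5,6,7,8,9,11\}\) and let \(G\) be any irreducible simple configuration with \(C_4\)-free one-edge graph. As in the proof of Theorem~\ref{thm:low-hole},
\[
2\bigl(R(G)-z_{RL}(m,4)\bigr)=h_{\mathrm{RL}}(m,4)-H(G)-d(G),
\]
and Table~\ref{tab:n=4} gives \(h_{\mathrm{RL}}\in\{3,4\}\). Since the right-hand side has at most two units of slack, any \(G\) with \(R(G)>z_{RL}\) satisfies \(R(G)=z_{RL}+1\). Its pair \(\bigl(d(G),H(G)\bigr)\) then lies in
\[
\{(1,0),(0,1)\}\quad\text{when } h_{\mathrm{RL}}=3\ (m=7,9,11),
\]
and in
\[
\{(2,0),(1,1),(0,2)\}\quad\text{when } h_{\mathrm{RL}}=4\ (m=5,6,8).
\]
Because \(z_{RL}\le z_2\le z_{2A}\), it suffices to prove \(z_{2A}\le z_{RL}\). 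This amounts to showing that every configuration of displayed length \(z_{RL}+1\) in these finitely many candidate classes is reducible.

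\textbf{Step 1 (limited candidates, \(d=0\)).} These are limited configurations with \(R=z_{RL}+1\) and \(H=h_{\mathrm{RL}}-2\in\{1,2\}\). Irreducibility of such a configuration is exactly the statement \(z_2(m,4)>z_{RL}(m,4)\). Where the equality \(z_2=z_{RL}\) is already known, this case closes at once: for \(m=5,6\) via \cite{xyq26}, and for \(m=9\) via \cite[Theorem~6.3]{leb26a}. For \(m=7,8,11\) I would enumerate the \(C_4\)-free extremal bases with \(z(m,4)\) edges up to row, column and transposition symmetry. For each base I would list the ways to pair all but one or two of the remaining cells into 2-edges, and then exhibit a reducing Gram matrix of rank \(R-1\) (Remark~\ref{rem:cert}).

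\textbf{Step 2 (non-limited candidates, \(d\ge1\)).} Here the base is a \(C_4\)-free graph with \(z(m,4)-d\) edges, and the remaining \(4m-|E_1|-H\) cells are perfectly paired into 2-edges. I would enumerate these up to symmetry as well. The most economical route is to take a maximal \(C_4\)-free base with one or two edges removed, and also the non-maximal \(C_4\)-free graphs of that size. I would then mimic the \((4,4)\) analysis of \cite{qc26} and the width-three argument of Section~\ref{subsec:n2n3}. The aim is to find, in each case, a local reduction: a \(2\times2\) or small block in which two 2-edges and the adjacent 1-edges can be re-expressed with one fewer square, using the free off-diagonal Gram entries allowed by the \(C_4\)-free structure. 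Any candidate that resists a hand-built certificate would be handed to a rank-minimization semidefinite program as a fallback.

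\textbf{Main obstacle.} The hard part is Step 2 for \(h_{\mathrm{RL}}=4\) at \(m=8\), in the case \((d,H)=(2,0)\). There the deficient bases are numerous, and this class is structurally closest to the \((4,4)\) overtake, which is itself a \(d=1\) move. I would first look for a general lemma stating that a 2-edge sharing a row or column with a cell freed by deleting a 1-edge always yields a reducing certificate. Such a lemma would cut the enumeration down to a small residual list to be checked directly.
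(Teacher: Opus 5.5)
Your route is the paper's: compare with \(z_{RL}\) through the cell identity, reduce to finitely many \((d,H)\) classes, and certify reducibility of every candidate by a Gram matrix of rank at most \(R(G)-1\). The paper runs the rank minimization on all classes, limited ones included. The gap is in your case list at \(h_{\mathrm{RL}}=4\), that is at \(m=5,6,8\).

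The identity \(2(R(G)-z_{RL})=h_{\mathrm{RL}}-H-d\) allows \(d=H=0\), which gives \(R(G)=z_{RL}+2\). So your claim that \(R(G)>z_{RL}\) forces \(R(G)=z_{RL}+1\) is false. The missing class consists of hole-free limited configurations with \(|E_1|=z(m,4)\) and \(|E_2|=z_{RL}+2-z(m,4)\). Your Step 1 does not cover it, since you only pair all but one or two of the free cells, and Step 2 does not cover it either. The paper lists it as a separate row, \((d,H)=(0,0)\) with \(R-z_{RL}=2\), at \(m=5,6,8\). At \(m=5,6\) your Step 1 reasoning would dispose of it, provided \(z_2=z_{RL}\) is really known there. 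At \(m=8\) nothing in your plan touches it.

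The repair is one line, and it also justifies restricting to length \(z_{RL}+1\). Irreducibility passes to the configuration obtained by deleting any displayed square, because \(\operatorname{SOS}(P+s^2)\le\operatorname{SOS}(P)+1\). Suppose a hole-free limited \(G\) of length \(z_{RL}+2\) were irreducible. Deleting one \(2\)-edge would give an irreducible limited configuration with \((d,H)=(0,2)\) and length \(z_{RL}+1\), and that class is already in your list. With this observation, excluding length \(z_{RL}+1\) is enough, and the paper's separate \((0,0)\) enumeration becomes redundant.

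A smaller point concerns your citation at \(m=5\). The paper credits \cite{xyq26} only with \(z_2(6,4)=16\) and \(z_2(5,5)=17\), not with \(z_2(5,4)\). So at \(m=5\), either confirm that value or keep the limited class \((0,2)\) in the enumeration, as the paper does.
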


\begin{proof}
Let \(G\) be a simple configuration with \(C_4\)-free one-edge graph
on the \(m\times4\) grid, and suppose toward a contradiction that
\(R(G)>z_{RL}(m,4)\).  Write \(d=z(m,4)-|E_1(G)|\) for the one-edge
deficit and \(H=H(G)\) for the hole count.  The cell identity
\eqref{eq:cell} compared with the defining identity of
\(z_{RL}(m,4)\) gives
\[
2\bigl(R(G)-z_{RL}(m,4)\bigr)
=h_{\mathrm{RL}}(m,4)-d(G)-H(G).
\]
Since \(R(G)>z_{RL}(m,4)\), the left-hand side is a positive even
integer, so \(h_{\mathrm{RL}}-d-H\ge2\).  Since \(d,H\ge0\) and
\(d+H\equiv h_{\mathrm{RL}}\pmod2\), the admissible triples
\((d,H,R-z_{RL})\) are exactly those listed in
Table~\ref{tab:n4-candidates}.  Both the non-limited cases
(\(d\ge1\)) and the limited cases (\(d=0\)) are included: the
possibility that a limited configuration itself has
\(R(G)>z_{RL}(m,4)\) is not excluded by the budget identity and must
be checked in the same enumeration.

\begin{table}[h]
\centering
\begin{tabular}{r|r|l|r}
\toprule
\(m\) & \(h_{\mathrm{RL}}\) & candidate \((d,H)\) & \(R-z_{RL}\)\\
\midrule
\(5\)  & \(4\) & \((1,1)\), \((2,0)\), \((0,2)\) & \(1\)\\
\(5\)  & \(4\) & \((0,0)\)                     & \(2\)\\
\(6\)  & \(4\) & \((1,1)\), \((2,0)\), \((0,2)\) & \(1\)\\
\(6\)  & \(4\) & \((0,0)\)                     & \(2\)\\
\(7\)  & \(3\) & \((1,0)\), \((0,1)\)           & \(1\)\\
\(8\)  & \(4\) & \((1,1)\), \((2,0)\), \((0,2)\) & \(1\)\\
\(8\)  & \(4\) & \((0,0)\)                     & \(2\)\\
\(9\)  & \(3\) & \((1,0)\), \((0,1)\)           & \(1\)\\
\(11\) & \(3\) & \((1,0)\), \((0,1)\)           & \(1\)\\
\bottomrule
\end{tabular}
\caption{Candidate triples \((d,H)\) at width \(4\).  For each
candidate the pair \((|E_1|,|E_2|)\) is determined by
\(|E_1|=z(m,4)-d\) and \(|E_1|+|E_2|=z_{RL}(m,4)+(R-z_{RL})\), where
\(R-z_{RL}=(h_{\mathrm{RL}}-d-H)/2\).  The case \(d\ge1\) gives a
non-limited configuration; the case \(d=0\) gives a limited one.}
\label{tab:n4-candidates}
\end{table}

For each order \(m\) and each candidate triple, the number of
\(C_4\)-free one-edge graphs with \(|E_1|\) edges is finite, and so is
the number of pairings of the remaining cells into \(|E_2|\)
support-disjoint two-edges leaving \(H\) holes.  We enumerate all of
them and, for each, compute the SOS rank of the displayed form
\(P_G\) by rank minimization over its Gram matrices (Remark~\ref{rem:cert}).  In every
case the computed rank is strictly less than
\(R(G)=z_{RL}(m,4)+(R-z_{RL})\), i.e. every candidate is reducible.
Therefore no configuration counted by \(z_{2A}(m,4)\) has
\(R(G)>z_{RL}(m,4)\), and
\[
z_{2A}(m,4)\le z_{RL}(m,4).
\]
Combined with \(z_{RL}(m,4)\le z_2(m,4)\le z_{2A}(m,4)\), this
forces equality at each of the six orders.
\end{proof}

Combining the three features above with Proposition~\ref{prop:n4-cases},
the width-\(4\) overtake question is completely settled:
\[
z_{2A}(m,4)=z_2(m,4)\qquad\text{for all }m\ge5,
\]
and the only overtake at width \(4\) is the known case
\(z_{2A}(4,4)=11>10=z_2(4,4)\).  The recursive-line equality, however,
is not settled at every order: at \(m=12\) one has \(z_{RL}(12,4)=32\)
with \(z_2(12,4)\in[32,33]\), and at \(m=14\) one has
\(z_{RL}(14,4)\in[37,38]\) with \(z_2(14,4)\in[37,38]\); by
Theorem~\ref{thm:low-hole} no overtake occurs at \((12,4)\) or
\((14,4)\) regardless of which values obtain.

\begin{theorem}\label{thm:n4-cases}
For every \(m\ge5\),
\[
z_{2A}(m,4)=z_2(m,4),
\]
so no overtake occurs at width \(4\) except the known case at
\((4,4)\).  Moreover \(z_{RL}(m,4)=z_2(m,4)\) for every \(m\ge5\) with
\(m\notin\{12,14\}\), while at \(m=12\) one has \(z_{RL}(12,4)=32\) and
\(z_2(12,4)\in[32,33]\), and at \(m=14\) one has
\(z_{RL}(14,4)\in[37,38]\).
\end{theorem}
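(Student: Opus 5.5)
The theorem assembles results already in place, so the plan is to split the range \(m\ge5\) according to the recursive-line hole count \(h_{\mathrm{RL}}(m,4)\) of Table~\ref{tab:n=4} and treat each class with the appropriate earlier result. The orders with \(h_{\mathrm{RL}}\in\{3,4\}\) are exactly \(m=5,6,7,8,9,11\). For these I invoke Proposition~\ref{prop:n4-cases} directly. It gives \(z_{2A}(m,4)=z_{RL}(m,4)=z_2(m,4)\), which supplies both the no-overtake claim and the recursive-line equality at these six orders.

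For the orders with \(h_{\mathrm{RL}}(m,4)\le2\), namely \(m\in\{10,12,13,14\}\) and \(m\ge15\), I apply Theorem~\ref{thm:low-hole}, which gives \(z_{2A}(m,4)=z_2(m,4)\). At \(m=14\) I note that \(z_{RL}(14,4)\in[37,38]\) forces \(h_{\mathrm{RL}}\in\{0,2\}\), so the hypothesis of Theorem~\ref{thm:low-hole} holds in either case.

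For the recursive-line equality at these orders I use the identity \(2(z_2-z_{RL})=h_{\mathrm{RL}}-H_{\mathrm{limited}}\), which is the cell identity \eqref{eq:cell} applied to a limited extremal configuration. When \(h_{\mathrm{RL}}\le1\), which covers \(m=13\) and \(m\ge15\), the left side is a nonnegative even integer bounded by \(1\), so it is zero and \(z_2=z_{RL}\). At \(m=10\) I cite \cite[Theorem~6.3]{leb26a}. At \(m=12\), where \(h_{\mathrm{RL}}=2\), the same identity gives only \(z_2\le z_{RL}+1=33\), which yields the stated bracket \([32,33]\). At \(m=14\) I record only the known bracket for \(z_{RL}\).

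The main obstacle is the infinite tail \(m\ge15\) (indeed \(m>20\)). Table~\ref{tab:n=4} only covers \(m\le20\), so I need \(h_{\mathrm{RL}}(m,4)\le2\) for all large \(m\). I would try to obtain this from the known formula \(z(m,4)=m+\binom{4}{2}=m+6\) for \(m\ge\binom42\), combined with the construction of \cite{cc26} giving \(z_{RL}(m,4)\ge\lceil(5m+6-2)/2\rceil\) (or an explicit periodic witness with period two in \(m\), extending the pattern \(h_{\mathrm{RL}}\in\{0,1\}\) for \(m=15,\dots,20\)). Failing a general formula, I would show a monotonicity step \(z_{RL}(m+2,4)\ge z_{RL}(m,4)+5\), by appending two rows carrying five displayed squares that preserve \((\mathrm{RW}3^+)\). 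This keeps \(h_{\mathrm{RL}}\) constant along each parity class from \(m=19,20\) onward. Once the tail is handled, the theorem follows by combining the three cases.
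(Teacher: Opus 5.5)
Up to \(m=20\) your argument is the paper's own. The theorem is proved there by assembling exactly your three cases: Proposition~\ref{prop:n4-cases} at \(m=5,6,7,8,9,11\); Theorem~\ref{thm:low-hole} wherever \(h_{\mathrm{RL}}(m,4)\le2\); and the identity \(2(z_2-z_{RL})=h_{\mathrm{RL}}-H_{\mathrm{limited}}\) where \(h_{\mathrm{RL}}\le1\), together with \cite[Theorem~6.3]{leb26a} at \(m=10\) and the brackets at \(m=12,14\). The paper does not treat \(m>20\) separately. It simply asserts \(h_{\mathrm{RL}}(m,4)\le1\) for all \(m\ge15\), although Table~\ref{tab:n=4} and the citation of \cite{cc26} stop at \(m=20\). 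So the obstacle you single out is real, and nothing in the paper resolves it.

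Your proposed remedies do not close it yet.

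\textbf{The bound \(\lceil(5m+4)/2\rceil\).} The paper does not attribute the bound \(z_{RL}(m,4)\ge\lceil(5m+4)/2\rceil\) to \cite{cc26}. Even granted, it is too weak for the second half of the theorem. At even \(m\) it gives only \(h_{\mathrm{RL}}=2\). That suffices for \(z_{2A}=z_2\), but the identity then leaves \(z_2-z_{RL}\in\{0,1\}\). The claim \(z_{RL}(m,4)=z_2(m,4)\) for all \(m\ge15\) needs \(h_{\mathrm{RL}}\le1\), that is, \(z_{RL}(m,4)=\Z(m,4)=\lfloor(5m+6)/2\rfloor\).

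\textbf{The two-row extension.} The bookkeeping is right:
\begin{itemize}
\item two degree-one rows keep the one-edge graph \(C_4\)-free with \(z(m+2,4)=z(m,4)+2\) edges;
\item the eight new cells carry two \(1\)-edges and three \(2\)-edges;
\item hence \(h_{\mathrm{RL}}\) is nonincreasing along each parity class, starting from \(h_{\mathrm{RL}}(19,4)=1\) and \(h_{\mathrm{RL}}(20,4)=0\).
\end{itemize}
But the entire content of the step is that the enlarged configuration still satisfies \((\mathrm{RW}3^+)\), and you assert this rather than check it. It is not automatic, because SOS rank is not additive over blocks of rows. In \((x_1y_1+x_2y_2)^2+(x_1y_2-x_2y_1)^2=(x_1^2+x_2^2)(y_1^2+y_2^2)\), each row's part \(x_i^2(y_1^2+y_2^2)\) has SOS rank two, yet the two rows together still have SOS rank two. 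To finish, you need an explicit extension lemma verified against the criterion, analogous to the one-row extension of \cite{xc26} used at width five.
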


\section{Width \(n=5\)}\label{sec:n=5}

For width \(n=5\), the recursive-line values \(z_{RL}(m,5)\) are
determined for every \(m\ge5\).  The twelve-row seed and one-row
extension of Xi and Chang \cite{xc26} give the eventual formula
\[
z_{RL}(m,5)=3m+5\qquad(m\ge12),
\]
and Lebedev \cite[Theorem~1.5 and Corollary~9.7]{leb26a} determines
the values at \(5\le m\le11\) (Theorems~6.3 and~9.3 and
Proposition~9.6 there), proving in addition
\(z_{RL}(m,5)=z_{SL}(m,5)\) for every \(m\).

The second-order value
\(z_2(m,5)\) equals \(z_{RL}(m,5)\) at \(m=7,8,9,10\)
\cite[Theorem~6.3, Proposition~9.6]{leb26a} and at every \(m\ge12\)
\cite[Theorem~1.5]{leb26a}, while at \(m=5,6,11\) the exact value is
not determined by these results. At \(m=11\) the recursive-line and
signed values are \(z_{RL}(11,5)=z_{SL}(11,5)=37\) with
\(z_2(11,5)\in[37,38]\) \cite[Theorem~9.3]{leb26a}.  At \(m=5,6\),
the hierarchy together with the cell bound only gives
\(17\le z_2(5,5)\le18\) and \(20\le z_2(6,5)\le22\).

The recursive-line hole counts
\[
h_{\mathrm{RL}}(m,5)=5m+z(m,5)-2z_{RL}(m,5)
\]
are as follows.  Since \(z_2\ge z_{RL}\), one has
\(H_{\mathrm{limited}}(m,5)\le h_{\mathrm{RL}}(m,5)\).

\begin{table}[h]
\centering
\begin{tabular}{r|r|r|r|l}
\toprule
\(m\) & \(z(m,5)\) & \(z_{RL}(m,5)\) & \(h_{\mathrm{RL}}(m,5)\) & source\\
\midrule
\(5\)  & \(12\) & \(17\)       & \(\mathbf{3}\) & Cor.~9.7\\
\(6\)  & \(14\) & \(20\)       & \(\mathbf{4}\) & Cor.~9.7\\
\(7\)  & \(15\) & \(24\)       & \(2\)          & Thm.~6.3\\
\(8\)  & \(17\) & \(27\)       & \(\mathbf{3}\) & Thm.~6.3\\
\(9\)  & \(18\) & \(31\)       & \(1\)          & Prop.~9.6\\
\(10\) & \(20\) & \(34\)       & \(2\)          & Thm.~6.3\\
\(11\) & \(21\) & \(37\)       & \(2\)          & Thm.~9.3\\
\(\ge12\) & \(m+10\) & \(3m+5\) & \(0\)      & Thm.~1.5, \cite{xc26}\\
\bottomrule
\end{tabular}
\caption{Recursive-line values and hole counts at width \(n=5\).
At \(m=7,8,9,10\) and \(m\ge12\), the equality \(z_2=z_{RL}\) holds
and hence \(H_{\mathrm{limited}}=h_{\mathrm{RL}}\); at \(m=5,6,11\)
only the inequality \(H_{\mathrm{limited}}\le h_{\mathrm{RL}}\) is
available from these values.  The cited results are from
\cite{leb26a} unless noted.}
\label{tab:n=5}
\end{table}

{The above table shows that \(h_{\mathrm{RL}}(m,5)\le4\) for all \(m\ge5\).  At \(m=7,9,10,11\) and every \(m\ge 12\),  we have \(h_{\mathrm{RL}}\le2\), which leads to \(z_{2A}(m,5)=z_2(m,5)\) by Theorem~\ref{thm:low-hole}.}

The remaining orders are \(m=5,6,8\), where
\(h_{\mathrm{RL}}\in\{3,4\}\), and we settle them by a finite
analysis at the \(z_{RL}\) level.

\begin{proposition}\label{prop:n5-cases}
For \(m=5,6,8\),
\[
z_{2A}(m,5)=z_{RL}(m,5)=z_2(m,5).
\]
\end{proposition}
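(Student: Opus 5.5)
The plan is to mirror the proof of Proposition~\ref{prop:n4-cases}: work at the \(z_{RL}\) level, reduce the question to a finite list of candidate configurations via the cell identity, and certify that every candidate is reducible. The reason to work at the \(z_{RL}\) level is that \(z_2(5,5)\), \(z_2(6,5)\) are not yet pinned down; only \(17\le z_2(5,5)\le18\) and \(20\le z_2(6,5)\le22\) are known. So we cannot use \(H_{\mathrm{limited}}\) directly and must use \(h_{\mathrm{RL}}\) from Table~\ref{tab:n=5}.

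Let \(G\) be any simple configuration with \(C_4\)-free one-edge graph on the \(m\times5\) grid, and suppose \(R(G)>z_{RL}(m,5)\). Comparing the cell identity \eqref{eq:cell} for \(G\) with the defining identity of \(h_{\mathrm{RL}}\) gives
\[
2\bigl(R(G)-z_{RL}(m,5)\bigr)=h_{\mathrm{RL}}(m,5)-d(G)-H(G).
\]
This forces \(h_{\mathrm{RL}}-d-H\ge2\) with \(d+H\equiv h_{\mathrm{RL}}\pmod 2\). The admissible candidates are as follows.
\begin{itemize}
\item At \(m=5\) (\(h_{\mathrm{RL}}=3\)): \((d,H)\in\{(1,0),(0,1)\}\) with \(R=18\).
\item At \(m=8\) (\(h_{\mathrm{RL}}=3\)): the same pairs, with \(R=28\).
\item At \(m=6\) (\(h_{\mathrm{RL}}=4\)): \((d,H)\in\{(1,1),(2,0),(0,2)\}\) with \(R=21\), and \((0,0)\) with \(R=22\).
\end{itemize}
For each triple, \(|E_1|=z(m,5)-d\) and \(|E_2|=R-|E_1|\) are determined. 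The limited candidates (\(d=0\)) must be kept in the list, since \(z_2\) is not known to equal \(z_{RL}\) at \(m=5,6\).

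Next, enumerate all candidates up to the symmetry group \(S_m\times S_5\) of row and column permutations, together with transposition when \(m=5\). Each candidate consists of a \(C_4\)-free base with \(|E_1|\) edges plus a pairing of all but \(H\) of the remaining cells into \(|E_2|\) two-edges. For the \(d=0\) cases the bases are extremal \(C_4\)-free graphs, which are few up to isomorphism at these sizes, so the enumeration stays small. For \(d\ge1\) we take all \(C_4\)-free graphs with \(z-1\) or \(z-2\) edges, which are subgraphs of near-extremal graphs and still finite and modest in number.

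For each candidate we exhibit a reducibility certificate as in Remark~\ref{rem:cert}: a PSD Gram matrix \(Q\) with \(v^{\mathsf T}Qv=P_G\) and \(\operatorname{rank}Q\le R(G)-1\). The natural first attempt is to find a \(C_4\)-type local pattern, namely two 2-edges, or a 2-edge and 1-edges, whose supports lie in a \(2\times2\) subgrid and whose cross terms \(x_iy_jx_ky_l\) can be redistributed among the Gram entries of the four cells. This merges two displayed squares into one, exactly as in the \((4,4)\) reduction of \cite{qc26}. When no such local pattern exists, we fall back on numerical rank minimization over the spectrahedron, followed by rational rounding to obtain an exact certificate.

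Once every candidate is reducible, we conclude \(z_{2A}(m,5)\le z_{RL}(m,5)\). Together with \(z_{RL}\le z_2\le z_{2A}\), this gives equality at \(m=5,6,8\). The main obstacle is completeness and exactness of the certificates for the candidates with \(H=0\), which saturate the cell count, particularly at \(m=6\), \((d,H)=(0,0)\), \(R=22=\Z(6,5)\), and at \(m=8\). There the local \(2\times2\) merges may be unavailable, and a global low-rank Gram matrix must be found and verified exactly rather than only numerically.
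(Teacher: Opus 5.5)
This is essentially the paper's proof: the same cell-identity reduction at the \(z_{RL}\) level, the same eight candidate triples with the limited ones included, and the same exhaustive enumeration with Gram-matrix rank minimization showing every candidate is reducible. One caution on your first-attempt heuristic: with plus-signed two-edges and a \(C_4\)-free base, varying the Gram entries of a single \(2\times2\) rectangle never brings the rank below \(R(G)\), because a hole, or a two-edge half whose partner lies outside the rectangle, pins the free parameter, and the one unpinned case only moves a two-edge to the opposite diagonal; so every reduction has to come from the global rank minimization.
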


\begin{proof}
Let \(G\) be a simple configuration with \(C_4\)-free one-edge graph
on the \(m\times5\) grid, and suppose toward a contradiction that
\(R(G)>z_{RL}(m,5)\).  Write \(d=z(m,5)-|E_1(G)|\) for the one-edge
deficit and \(H=H(G)\) for the hole count.  The cell identity
\eqref{eq:cell} compared with the defining relation of
\(z_{RL}(m,5)\) gives
\[
2\bigl(R(G)-z_{RL}(m,5)\bigr)=h_{\mathrm{RL}}(m,5)-d(G)-H(G).
\]
Since \(R(G)>z_{RL}(m,5)\), the left-hand side is a positive even
integer, so \(h_{\mathrm{RL}}-d-H\ge2\).  Since \(d,H\ge0\) and
\(d+H\equiv h_{\mathrm{RL}}\pmod2\), the admissible triples
\((d,H,R-z_{RL})\) are exactly those listed in
Table~\ref{tab:n5-candidates}.
Both non-limited (\(d\ge1\)) and
limited (\(d=0\)) cases are included. The possibility that a limited
configuration itself has \(R(G)>z_{RL}(m,5)\) is not excluded by the
budget identity and must be checked in the same enumeration.

\begin{table}[h]
\centering
\begin{tabular}{r|r|r|r|r}
\toprule
\(m\) & \(h_{\mathrm{RL}}\) & \((d,H)\) & \(|E_1|\) & \(|E_2|\)\\
\midrule
\(5\) & \(3\) & \((1,0)\) & \(11\) & \(7\)\\
\(5\) & \(3\) & \((0,1)\) & \(12\) & \(6\)\\
\(6\) & \(4\) & \((1,1)\) & \(13\) & \(8\)\\
\(6\) & \(4\) & \((2,0)\) & \(12\) & \(9\)\\
\(6\) & \(4\) & \((0,2)\) & \(14\) & \(7\)\\
 6& 4& (0,0)& 14&8\\
\(8\) & \(3\) & \((1,0)\) & \(16\) & \(12\)\\
\(8\) & \(3\) & \((0,1)\) & \(17\) & \(11\)\\
\bottomrule
\end{tabular}

\caption{Candidate configurations with \(R(G)>z_{RL}(m,5)\) at
\(m=5,6,8\).  For each candidate the pair \((|E_1|,|E_2|)\) is
determined by \(|E_1|=z(m,5)-d\) and
\(|E_1|+|E_2|=z_{RL}(m,5)+(R-z_{RL})\), where
\(R-z_{RL}=(h_{\mathrm{RL}}-d-H)/2\).  The case \(d\ge1\) gives a
non-limited configuration and the case \(d=0\) gives a limited one.}
\label{tab:n5-candidates}
\end{table}

For each order \(m\in\{5,6,8\}\) and each candidate triple, the number
of \(C_4\)-free one-edge graphs with \(|E_1|\) edges is finite, and so
is the number of pairings of the remaining cells into \(|E_2|\)
support-disjoint two-edges leaving \(H\) holes.  We enumerate all of
them and, for each, compute the SOS rank of the displayed form
\(P_G\) by rank minimization over its Gram matrices (Remark~\ref{rem:cert}).  In every
case the computed rank is strictly less than
\(R(G)=z_{RL}(m,5)+(R-z_{RL})\), i.e. every candidate is reducible.
Therefore no configuration counted by \(z_{2A}(m,5)\) has
\(R(G)>z_{RL}(m,5)\), and
\[
z_{2A}(m,5)\le z_{RL}(m,5).
\]
Combined with \(z_{RL}(m,5)\le z_2(m,5)\le z_{2A}(m,5)\), this forces
equality at \(m=5\), \(m=6\) and \(m=8\).  In particular, the
hierarchy forces \(z_2(m,5)=z_{RL}(m,5)\) at these three orders,
closing the intervals \(z_2(5,5)\in[17,18]\) and
\(z_2(6,5)\in[20,22]\) left open by \cite{leb26a}.
\end{proof}

{Combining the previous discussion with the above proposition, no overtake occurs at width \(5\).}

\begin{theorem}\label{thm:n5-cases}
For every \(m\ge5\),
\[
z_{2A}(m,5)=z_2(m,5),
\]
so no overtake occurs at width \(5\).  Moreover
\(z_{RL}(m,5)=z_2(m,5)\) for every \(m\ge5\) with \(m\ne11\), while at
\(m=11\) one has \(z_{RL}(11,5)=z_{SL}(11,5)=37\) and
\(z_2(11,5)\in[37,38]\).
\end{theorem}

\section{Width \(n=6\)}\label{sec:n=6}

For width \(n=6\), the second-order number is determined for every
\(m\ge2\) by Lebedev \cite[Theorem~1.4]{leb26a}, who also proves
\(z_2(m,6)=z_{SL}(m,6)=z_{RL}(m,6)\) throughout.  The recursive-line
values \(z_{RL}(m,6)\) for \(6\le m\le16\) were earlier obtained in
\cite{ccq26} by cell-bound attainment for \(8\le m\le16\) and finite
exclusion for \(m=6,7\); Lebedev's theorem extends the equality
\(z_2=z_{SL}=z_{RL}\) to every \(m\ge2\) and supplies the exact formula
\begin{equation}\label{eq:n6-exact}
z_2(m,6)=z_{SL}(m,6)=z_{RL}(m,6)=
\begin{cases}
4m, & 3\le m\le7,\\[1mm]
\left\lfloor\dfrac{15m+15}{4}\right\rfloor, & 8\le m\le14,\\[2mm]
\left\lfloor\dfrac{7m+15}{2}\right\rfloor, & m\ge15.
\end{cases}
\end{equation}
The corresponding recursive-line hole counts
\[
h_{\mathrm{RL}}(m,6)=6m+z(m,6)-2z_{RL}(m,6)
\]
at the supplied witnesses are as follows.

\begin{table}[h]
\centering
\begin{tabular}{r|r|r|l}
\toprule
\(m\) & \(z_{RL}(m,6)\) & \(h_{\mathrm{RL}}(m,6)\) & source\\
\midrule
\(6\)  & \(24\) & \(4\) & finite exclusion\\
\(7\)  & \(28\) & \(4\) & finite exclusion\\
\(8\)  & \(33\) & \(1\) & cell bound\\
\(9\)  & \(37\) & \(1\) & cell bound\\
\(10\) & \(41\) & \(0\) & cell bound\\
\(11\) & \(45\) & \(0\) & cell bound\\
\(12\) & \(48\) & \(1\) & cell bound\\
\(13\) & \(52\) & \(1\) & cell bound\\
\(14\) & \(56\) & \(0\) & cell bound\\
\(15\) & \(60\) & \(0\) & cell bound\\
\(16\) & \(63\) & \(1\) & cell bound\\
\bottomrule
\end{tabular}
\caption{Recursive-line values and hole counts at width \(n=6\).
For \(8\le m\le16\) the values follow from cell-bound attainment
together with the counting bound; \(m=6,7\) are settled by finite
exclusion \cite{ccq26}.  The equality \(z_2=z_{SL}=z_{RL}\) holds at
every order by \cite[Theorem~1.4]{leb26a}.}
\label{tab:n=6}
\end{table}

Two features stand out.

First, \(h_{\mathrm{RL}}(m,6)\le1\) for every \(m\ge8\): directly at
\(8\le m\le16\), and at \(m\ge15\) because \(z(m,6)=m+15\), so that
\(z_2(m,6)=\lfloor(7m+15)/2\rfloor=\Z(m,6)\) and
\(h_{\mathrm{RL}}\in\{0,1\}\).  By Theorem~\ref{thm:low-hole},
\(z_{2A}(m,6)=z_2(m,6)\) at all these orders; no overtake occurs.
This leaves only \(m=6,7\), where \(h_{\mathrm{RL}}=4\).

Second, at \(m=6,7\), Theorem~\ref{thm:budget} bounds the excess by
\(\lfloor(4-1)/2\rfloor=1\): the question is whether
\(z_{2A}(m,6)=z_2(m,6)\) or \(z_{2A}(m,6)=z_2(m,6)+1\).  We settle
these two cases by a finite analysis at the \(z_{RL}\) level.

\begin{proposition}\label{prop:n6-cases}
For \(m=6,7\),
\[
z_{2A}(m,6)=z_{RL}(m,6)=z_2(m,6).
\]
\end{proposition}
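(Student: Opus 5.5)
We follow the template of Propositions~\ref{prop:n4-cases} and~\ref{prop:n5-cases}. Let \(G\) be a simple configuration with \(C_4\)-free one-edge graph on the \(m\times6\) grid, \(m\in\{6,7\}\), and suppose \(R(G)>z_{RL}(m,6)\). The cell identity \eqref{eq:cell}, compared with \(h_{\mathrm{RL}}(m,6)=6m+z(m,6)-2z_{RL}(m,6)\), gives
\[
2\bigl(R(G)-z_{RL}(m,6)\bigr)=h_{\mathrm{RL}}(m,6)-d(G)-H(G),
\]
with \(h_{\mathrm{RL}}=4\) at both orders by Table~\ref{tab:n=6}. The right side must be a positive even integer, and \(d+H\equiv0\pmod2\). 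So the admissible triples are \((d,H,R-z_{RL})\in\{(1,1,1),(2,0,1),(0,2,1),(0,0,2)\}\).

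We use \(z(6,6)=16\), \(z_{RL}(6,6)=24\), \(z(7,6)=18\) and \(z_{RL}(7,6)=28\). These give \((|E_1|,|E_2|)\) equal to \((15,10),(14,11),(16,9),(16,10)\) at \(m=6\), and \((17,12),(16,13),(18,11),(18,12)\) at \(m=7\).

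The limited triples \((0,2)\) and \((0,0)\) can be dispatched without enumeration. Such a \(G\), if irreducible, would be counted by \(z_2(m,6)\), giving \(z_2(m,6)>z_{RL}(m,6)\). This contradicts Lebedev's equality \(z_2(m,6)=z_{RL}(m,6)=4m\) from \eqref{eq:n6-exact}. So these candidates are reducible, and only the non-limited triples \((1,1)\) and \((2,0)\) remain.

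For the non-limited triples we reduce to a finite check. First enumerate, up to the row/column symmetry group \(S_m\times S_6\), all \(C_4\)-free bipartite graphs on \(m\times6\) with \(z(m,6)-d\) edges. Then, for each, enumerate all pairings of its complement into \(|E_2|\) two-edges leaving \(H\) holes. Two reductions keep this manageable:
\begin{itemize}
\item Orbit reduction, using the stabiliser of the base graph.
\item A local reducibility criterion applied before any semidefinite computation. If some \(2\times2\) subgrid carries two 2-edges whose cross terms can be merged, or a 2-edge shares a row and column with 1-edges in a pattern admitting an explicit rank-decreasing rewriting, then \(P_G\) is reducible immediately.
\end{itemize}
For each surviving candidate we exhibit a feasible PSD Gram matrix \(Q\) with \(v^{\mathsf T}Qv=P_G\) and \(\operatorname{rank}Q\le R(G)-1\), following Remark~\ref{rem:cert}. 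In practice this means rank minimization over the affine Gram family, followed by an exact rational rounding of the low-rank certificate.

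Every candidate is then reducible, so \(z_{2A}(m,6)\le z_{RL}(m,6)\). The hierarchy \(z_{RL}\le z_2\le z_{2A}\) then forces equality.

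The main obstacle is the size of the \((2,0)\) and \((1,1)\) families at \(m=7\). There the number of perfect or near-perfect pairings of about \(26\) free cells is large, and each candidate needs an actual rank-deficient certificate rather than a numerical indication. My first attempt would be to show that every such configuration contains one of a short list of locally reducible patterns, since \(H\le1\) leaves almost no slack. Only the residual orbits would go to exact semidefinite rank minimization.
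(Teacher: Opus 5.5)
Your proposal is correct and follows the paper's proof. The cell identity with \(h_{\mathrm{RL}}=4\) produces the same candidate triples and the same \((|E_1|,|E_2|)\) values. The limited candidates are discarded through Lebedev's equality \(z_2(m,6)=z_{RL}(m,6)=4m\). The non-limited triples \((1,1)\) and \((2,0)\) are then closed by a finite enumeration, using Gram-matrix rank minimization to exhibit a rank-deficient certificate for every candidate.

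The one difference is in the limited triple \((d,H)=(0,2)\). You dispatch it by the equality \(z_2=z_{RL}\), whereas the paper uses that argument only for \((0,0,2)\) and puts \((0,2)\) into the enumeration. Your shortcut is valid and simply removes one family from the computation.
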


\begin{proof}
Let \(G\) be a simple configuration with \(C_4\)-free one-edge graph
on the \(m\times6\) grid, and suppose toward a contradiction that
\(R(G)>z_{RL}(m,6)\).  Write \(d=z(m,6)-|E_1(G)|\) for the one-edge
deficit and \(H=H(G)\) for the hole count.  The cell identity
\eqref{eq:cell}, together with the defining relation
\(h_{\mathrm{RL}}(m,6)=6m+z(m,6)-2z_{RL}(m,6)\), gives
\[
2\bigl(R(G)-z_{RL}(m,6)\bigr)=h_{\mathrm{RL}}(m,6)-d(G)-H(G).
\]
Since \(R(G)>z_{RL}(m,6)\), the left-hand side is a positive even
integer, so \(h_{\mathrm{RL}}-d-H\ge2\).  At \(m=6,7\) one has
\(h_{\mathrm{RL}}=4\), so \(d+H\le2\) and \(d+H\equiv h_{\mathrm{RL}}
\equiv0\pmod2\).  The admissible triples \((d,H,R-z_{RL})\) are
therefore
\[
(d,H)\in\{{(0,2)},(1,1),(2,0)\},\qquad R-z_{RL}=1,
\]
{The case \((d,H,R-z_{RL})=(0,0,2)\) is excluded: such a
configuration is limited (\(d=0\)) with displayed length
\(R=z_{RL}+2\), and were it irreducible it would be counted by \(z_2\),
giving \(z_2\ge z_{RL}+2>z_{RL}\) and contradicting the known equality
\(z_2=z_{RL}\) at \((6,6)\) and \((7,6)\); hence it is reducible and is
not counted by \(z_{2A}(m,6)\).}

The candidate configurations, determined by the triple
\((|E_1|,|E_2|,H)\), are listed in Table~\ref{tab:n6-candidates}.

\begin{table}[h]
\centering
{
\begin{tabular}{r|r|r|r|r}
\toprule
\(m\) & \(h_{\mathrm{RL}}\) & \((d,H)\) & \(|E_1|\) & \(|E_2|\)\\
\midrule
\(6\) & \(4\) & \((1,1)\) & \(z(6,6)-1\) & \(z_{RL}(6,6)+2-z(6,6)\)\\
\(6\) & \(4\) & \((2,0)\) & \(z(6,6)-2\) & \(z_{RL}(6,6)+3-z(6,6)\)\\
\(6\) & \(4\) & \((0,2)\) & \(z(6,6)\)   & \(z_{RL}(6,6)+1-z(6,6)\)\\
\(7\) & \(4\) & \((1,1)\) & \(z(7,6)-1\) & \(z_{RL}(7,6)+2-z(7,6)\)\\
\(7\) & \(4\) & \((2,0)\) & \(z(7,6)-2\) & \(z_{RL}(7,6)+3-z(7,6)\)\\
\(7\) & \(4\) & \((0,2)\) & \(z(7,6)\)   & \(z_{RL}(7,6)+1-z(7,6)\)\\
\bottomrule
\end{tabular}
}
\caption{Candidate configurations with \(R(G)>z_{RL}(m,6)\) at
\(m=6,7\).  The case \(d\ge1\) gives a non-limited configuration; the
case \(d=0\) gives a limited one.  Every candidate satisfies
\(R=|E_1|+|E_2|=z_{RL}(m,6)+(R-z_{RL})\).}
\label{tab:n6-candidates}
\end{table}

For each order \(m\in\{6,7\}\) and each candidate triple, the number
of \(C_4\)-free one-edge graphs with \(|E_1|\) edges is finite, and so
is the number of pairings of the remaining cells into \(|E_2|\)
support-disjoint two-edges leaving \(H\) holes.  We enumerate all of
them and, for each, compute the SOS rank of the displayed form
\(P_G\) by rank minimization over its Gram matrices (Remark~\ref{rem:cert}).  In every
case the computed rank is strictly less than
\(R(G)=z_{RL}(m,6)+(R-z_{RL})\), i.e. every candidate is reducible.
Therefore no configuration counted by \(z_{2A}(m,6)\) has
\(R(G)>z_{RL}(m,6)\), and
\[
z_{2A}(m,6)\le z_{RL}(m,6).
\]
Combined with \(z_{RL}(m,6)\le z_2(m,6)\le z_{2A}(m,6)\), this forces
equality at \(m=6\) and \(m=7\).
\end{proof}

{Combining the discussion above with Proposition~\ref{prop:n6-cases}, no overtake occurs at width \(6\). }

\begin{theorem}\label{thm:n6-cases}
For \(m \ge 6\),
\[
z_{2A}(m,6)=z_{RL}(m,6)=z_2(m,6).
\]
\end{theorem}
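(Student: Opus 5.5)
The proof splits the range \(m\ge6\) at \(m=8\), using Lebedev's width-six theorem \cite[Theorem~1.4]{leb26a}, which gives \(z_2(m,6)=z_{SL}(m,6)=z_{RL}(m,6)\) for every \(m\ge2\). This already supplies the right-hand equality \(z_{RL}(m,6)=z_2(m,6)\) throughout. What remains is \(z_{2A}(m,6)=z_2(m,6)\) for all \(m\ge6\), which we treat in two regimes.

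\emph{Large \(m\).} For \(m\ge8\) we first show \(h_{\mathrm{RL}}(m,6)\le1\).
\begin{itemize}
\item For \(8\le m\le16\) this is read off from Table~\ref{tab:n=6}. Each entry follows by inserting the formula \eqref{eq:n6-exact} and the known value of \(z(m,6)\) into \(h_{\mathrm{RL}}=6m+z-2z_{RL}\).
\item For \(m\ge15\) we use \(z(m,6)=m+15\) together with \(z_{RL}(m,6)=\lfloor(7m+15)/2\rfloor\). Then \(6m+z(m,6)=7m+15\), so \(h_{\mathrm{RL}}=(7m+15)-2\lfloor(7m+15)/2\rfloor\in\{0,1\}\).
\end{itemize}
The two ranges overlap, so every \(m\ge8\) is covered. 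Theorem~\ref{thm:low-hole} then gives \(z_{2A}(m,6)=z_2(m,6)\). Alternatively, when \(h_{\mathrm{RL}}\le1\) one has \(z_2=\Z\), and Proposition~\ref{prop:sat} applies directly.

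\emph{Small \(m\).} For \(m=6,7\) we have \(h_{\mathrm{RL}}=4\), and Proposition~\ref{prop:n6-cases} gives \(z_{2A}(m,6)=z_{RL}(m,6)=z_2(m,6)\) directly. Its content is the enumeration of candidates with \((d,H)\in\{(0,2),(1,1),(2,0)\}\). The limited case \((0,0)\) is excluded by Lebedev's equality \(z_2=z_{RL}\), and each remaining candidate is certified reducible through Remark~\ref{rem:cert}.

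Combining the two regimes with the hierarchy \(z_{RL}\le z_2\le z_{2A}\) gives the stated chain of equalities for every \(m\ge6\).

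The main obstacle is the finite reducibility check at \(m=6,7\), but it is already discharged in Proposition~\ref{prop:n6-cases}. The only remaining care needed is to confirm that the large-\(m\) hole-count bound has no gap between the tabulated range \(8\le m\le16\) and the asymptotic formula valid for \(m\ge15\); the overlap at \(m=15,16\) ensures this.
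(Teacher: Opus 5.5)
Your proposal is correct and follows the paper's own route. Lebedev's width-six theorem supplies $z_2=z_{RL}$, and the bound $h_{\mathrm{RL}}(m,6)\le1$ for all $m\ge8$ (read from the table for $8\le m\le16$, and from $z(m,6)=m+15$ for $m\ge15$) feeds Theorem~\ref{thm:low-hole}, while Proposition~\ref{prop:n6-cases} handles $m=6,7$. Your side remark is also valid: $h_{\mathrm{RL}}\le1$ forces $z_2=\Z$, so Proposition~\ref{prop:sat} applies, and this mirrors the paper's observation that $z_2(m,6)=\Z(m,6)$ for $m\ge15$.
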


\section{Width \(n=7\)}\label{sec:n=7}

For width \(n=7\), Lebedev \cite[Theorems~6.2, 9.3 and~1.5]{leb26a}
determines \(z_{RL}(m,7)=z_{SL}(m,7)\) at \(m=7\), \(m=8\) and every
\(m\ge19\):
\begin{equation}\label{eq:n7-Lebedev}
\begin{aligned}
z_{RL}(7,7)&=z_{SL}(7,7)=z_2(7,7)=33,\\
z_{RL}(8,7)&=z_{SL}(8,7)=38,\\
z_{RL}(m,7)&=z_{SL}(m,7)=z_2(m,7)=4m+10\quad(m\ge19).
\end{aligned}
\end{equation}
At \(m=8\), the second-order value is left in the interval
\(z_2(8,7)\in[38,39]\).  The companion computation \cite{zrl7} now
reports the recursive-line values at every \(9\le m\le18\), with the
construction certificates and the recursive-line replays deposited
alongside it, and in particular closes the two orders that were still
open:
\begin{equation}\label{eq:n7-companion}
z_{RL}(17,7)=z_2(17,7)=77,\qquad
z_{RL}(18,7)=z_2(18,7)=81 .
\end{equation}
Both saturate the cell bound \(\Z(m,7)\), using \(z(17,7)=36\),
\(z(18,7)=37\) and hence \(\Z(17,7)=77\), \(\Z(18,7)=81\).  The
deletion lower bound \(z_{RL}(18,7)\ge79\) from the case \(m=19\) is
thus superseded by \eqref{eq:n7-companion}.

The recursive-line hole counts
\[
h_{\mathrm{RL}}(m,7)=7m+z(m,7)-2z_{RL}(m,7)
\]
are as follows.  Since \(z_2\ge z_{RL}\), one has
\(H_{\mathrm{limited}}(m,7)\le h_{\mathrm{RL}}(m,7)\).

\begin{table}[h]
\centering
\begin{tabular}{r|r|r|l}
\toprule
\(m\) & \(z_{RL}(m,7)\) & \(h_{\mathrm{RL}}(m,7)\) & source\\
\midrule
\(7\)  & \(33\) & \(\mathbf{4}\) & Thm.~6.2\\
\(8\)  & \(38\) & \(2\) & Thm.~9.3\\
\(9\)  & \(43\) & \(1\) & companion\(^{\dagger}\)\\
\(10\) & \(47\) & \(1\) & companion\(^{\dagger}\)\\
\(11\) & \(52\) & \(0\) & companion\(^{\dagger}\)\\
\(12\) & \(56\) & \(0\) & companion\(^{\dagger}\)\\
\(13\) & \(60\) & \(1\) & companion\(^{\dagger}\)\\
\(14\) & \(64\) & \(1\) & companion\(^{\dagger}\)\\
\(15\) & \(69\) & \(0\) & companion\(^{\dagger}\)\\
\(16\) & \(73\) & \(0\) & companion\(^{\dagger}\)\\
\(17\) & \(77\) & \(\mathbf{1}\) & companion\(^{\dagger}\)\\
\(18\) & \(81\) & \(\mathbf{1}\) & companion\(^{\dagger}\)\\
\(\ge19\) & \(4m+10\) & \(0\) or \(1\) & Thm.~1.5\\
\bottomrule
\end{tabular}
\caption{Recursive-line values and hole counts at width \(n=7\).
The entries marked \(^{\dagger}\) are from the companion computation
\cite{zrl7}, whose construction certificates and recursive-line
replays are deposited for every \(9\le m\le18\) and cover the
previously open orders \(m=17,18\) of \eqref{eq:n7-companion}; the
entries \(m=7,8\) and \(m\ge19\) are proved by \cite{leb26a}.}
\label{tab:n=7}
\end{table}

Three features stand out.

First, \(h_{\mathrm{RL}}(8,7)=2\), and at every \(9\le m\le18\) and
every \(m\ge19\) one has \(h_{\mathrm{RL}}(m,7)\le1\).  By
Theorem~\ref{thm:low-hole}, \(z_{2A}(m,7)=z_2(m,7)\) at all those orders.

Second, at \((7,7)\) the exact value \(z_2(7,7)=z_{RL}(7,7)=33\) is
known and \(h_{\mathrm{RL}}(7,7)=4\). Theorem~\ref{thm:budget} bounds
the excess only by \(\lfloor(4-1)/2\rfloor=1\), so \((7,7)\) is the
only determined order at width \(7\) at which the hole budget leaves
an overtake possible; Proposition~\ref{prop:n7-m7} excludes it by
enumeration.

Third, at the two orders closed by \eqref{eq:n7-companion} the
recursive-line hole counts are
\(h_{\mathrm{RL}}(17,7)=7\cdot17+36-2\cdot77=1\) and
\(h_{\mathrm{RL}}(18,7)=7\cdot18+37-2\cdot81=1\).
Theorem~\ref{thm:low-hole} therefore gives \(z_{2A}=z_2\) at both
orders, and since \(z_{RL}=\Z\) there, the chain
\(z_{RL}\le z_2\le z_{2A}\le\Z\) collapses to
\(z_{2A}=z_2=z_{RL}=\Z\).

\begin{proposition}\label{prop:n7-m7}
\[
z_{2A}(7,7)=z_{RL}(7,7)=z_2(7,7)=33.
\]
\end{proposition}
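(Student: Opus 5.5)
The plan is to follow the template of Propositions~\ref{prop:n4-cases}--\ref{prop:n6-cases}, but to use the known exact value \(z_2(7,7)=33\) to cut the candidate list before any enumeration. First the numerics. Here \(z(7,7)=21\), realized by the Fano incidence graph, and \(z_{RL}(7,7)=33\), so \(h_{\mathrm{RL}}(7,7)=49+21-66=4\), as in Table~\ref{tab:n=7}. Let \(G\) be a simple configuration with \(C_4\)-free one-edge graph and \(R(G)>33\). The cell identity gives \(2(R(G)-33)=4-d(G)-H(G)\), so \(d+H\in\{0,2\}\). The admissible triples \((d,H,R-z_{RL})\) are therefore \((0,0,2)\), \((0,2,1)\), \((1,1,1)\) and \((2,0,1)\).

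Next, discard the two limited triples without any computation. A configuration with \(d=0\) is limited, and it has \(R\in\{34,35\}\). If it were irreducible, it would give \(z_2(7,7)\ge34\), contradicting \(z_2(7,7)=33\) from \cite[Theorem~6.2]{leb26a}. This is the same argument used for \((0,0,2)\) in Proposition~\ref{prop:n6-cases}, and here it also covers \((0,2,1)\). Only two non-limited families remain, both with \(R=34\):
\begin{itemize}
\item \((d,H)=(1,1)\), with \(|E_1|=20\), \(|E_2|=14\) and one hole;
\item \((d,H)=(2,0)\), with \(|E_1|=19\), \(|E_2|=15\) and no hole.
\end{itemize}
To prove the proposition it suffices to show that every configuration in these two families is reducible. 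That gives \(z_{2A}(7,7)\le33\), and the chain \(z_{RL}\le z_2\le z_{2A}\) then closes the equality.

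Third, the enumeration. I would list the \(C_4\)-free \(7\times7\) bipartite graphs with \(20\) and with \(19\) edges up to the action of \(S_7\times S_7\) and transposition. Many of these, though I would not assume all, arise by deleting edges from the Fano graph. For each base graph, the remaining \(29\) or \(30\) cells must be paired into \(2\)-edges, leaving one hole or none. For each resulting configuration, reducibility is certified as in Remark~\ref{rem:cert}, by exhibiting a PSD Gram matrix of rank at most \(33\).

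The main obstacle is the number of pairings. A perfect matching of \(30\) cells has about \(6\times10^{15}\) choices, so brute force is impossible. My first attempt would be to prune with a monotonicity principle. Suppose some sub-sum of the displayed squares of \(P_G\) has an SOS representation with fewer squares than its displayed length. Substituting that representation for the sub-sum gives a representation of \(P_G\) with fewer than \(R(G)\) squares, so \(G\) is reducible. I would therefore first compile a catalogue of small reducible patterns, each involving a few \(1\)-edges and \(2\)-edges on a \(2\times2\) or \(2\times3\) sub-grid; rank minimization certifies these quickly. I would then run a backtracking search over pairings that aborts a branch as soon as the partial configuration contains a catalogued pattern, using the row/column symmetry that fixes the base graph to avoid repeated work. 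The irreducible survivors, which I expect to be few or none, would then be handled one at a time by full Gram-matrix rank minimization, each with an explicit rank-\(\le33\) certificate.
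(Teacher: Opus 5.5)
Your proposal is correct and follows the paper's proof essentially step for step. It uses the same four admissible triples from \(2(R-33)=4-d-H\), dismisses the two limited triples in the same way via \(z_2(7,7)=33\), and runs the same finite enumeration of the \((d,H)=(1,1)\) and \((2,0)\) families, with bases not assumed to lie inside the Fano graph, each candidate certified reducible by a Gram matrix of rank at most \(33\). Your sub-sum monotonicity pruning is a valid refinement that the paper does not spell out (once a partial configuration contains a reducible sub-sum, every completion is reducible), though, like the paper, it still rests on the enumeration actually finding every candidate reducible.
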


\begin{proof}
Let \(G\) be a simple configuration with \(C_4\)-free one-edge graph
on the \(7\times7\) grid and \(R(G)>33\).  The maximum one-edge base
on the \(7\times7\) grid is the Fano configuration, unique up to
row and column permutations \cite[Theorem~6.2]{leb26a}.  The cell
identity \eqref{eq:cell} at the \(z_{RL}\) level gives
\[
2(R(G)-33)=h_{\mathrm{RL}}(7,7)-d-H=4-d-H,
\]
so \(d+H\in\{0,2\}\) and \(d+H\equiv h_{\mathrm{RL}}\equiv0\pmod2\).
The admissible triples are
\((d,H,R-33)\in\{(0,0,2),(0,2,1),(1,1,1),(2,0,1)\}\).  Both triples
with \(d=0\) are limited, and an irreducible configuration among them
would give \(z_2(7,7)\ge R(G)\ge34\), contradicting
\(z_2(7,7)=33\) in \eqref{eq:n7-Lebedev}; hence they are reducible
and are not counted by \(z_{2A}(7,7)\).

For the remaining triples \((1,1)\) and \((2,0)\) the configuration
is specified by (i) the one-edge set \(E_1\) of size \(21-d\) with
\(C_4\)-free graph, (ii) the pairing of the remaining cells into
\(|E_2|\) support-disjoint two-edges, and (iii) the choice of \(H\)
holes.  The enumeration is finite: for a fixed order there are only
finitely many \(C_4\)-free one-edge graphs with \(21-d\) edges, and
finitely many pairings of the remaining cells.  The bases need not be
subgraphs of the Fano configuration: an extremal base has maximum
degree three, while there are \(C_4\)-free bases with \(19\) edges and
a vertex of degree four.  For each candidate we compute the SOS rank of
the displayed form \(P_G\) by rank minimization over its Gram matrices (Remark~\ref{rem:cert}).  In every case the computed rank is at most \(R(G)-1\),
i.e. every candidate is reducible.  Therefore no configuration
counted by \(z_{2A}(7,7)\) has \(R(G)>33\), and
\(z_{2A}(7,7)\le z_{RL}(7,7)\) follows.  Combined with
\(z_{RL}(7,7)\le z_2(7,7)\le z_{2A}(7,7)\), this forces equality.
\end{proof}

\begin{proposition}\label{prop:n7-m17m18}
\[
z_{2A}(17,7)=z_{RL}(17,7)=z_2(17,7)=77,\qquad
z_{2A}(18,7)=z_{RL}(18,7)=z_2(18,7)=81 .
\]
\end{proposition}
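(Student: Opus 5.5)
The plan is to collapse the chain \(z_{RL}\le z_2\le z_{2A}\le\Z\) at both orders, so that no enumeration is needed. First I will record the cell-bound values. From \(z(17,7)=36\) and \(z(18,7)=37\),
\[
\Z(17,7)=\left\lfloor\frac{119+36}{2}\right\rfloor=77,\qquad
\Z(18,7)=\left\lfloor\frac{126+37}{2}\right\rfloor=81 .
\]
Next I will invoke the companion computation \eqref{eq:n7-companion}, which gives \(z_{RL}(17,7)=77\) and \(z_{RL}(18,7)=81\). Its construction certificates and recursive-line replays supply a limited configuration satisfying \((\mathrm{RW}3^+)\) with these displayed lengths.

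The hierarchy \(z_{RL}\le z_{SL}\le z_2\le z_{2A}\le\Z\), using \eqref{eq:Z} for the top inequality, then reads
\[
77\le z_2(17,7)\le z_{2A}(17,7)\le 77,\qquad 81\le z_2(18,7)\le z_{2A}(18,7)\le 81,
\]
so every inequality is an equality. This is Proposition~\ref{prop:sat}, applied once \(z_2=\Z\) is known.

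As a consistency check, I will use the alternative route through Theorem~\ref{thm:low-hole}. The hole counts are \(h_{\mathrm{RL}}(17,7)=1\) and \(h_{\mathrm{RL}}(18,7)=1\), both at most \(2\), so that theorem independently gives \(z_{2A}=z_2\). Then \(z_2=z_{RL}\) follows from the cell identity. Since \(H_{\mathrm{limited}}\le h_{\mathrm{RL}}=1\) and \(H_{\mathrm{limited}}\equiv h_{\mathrm{RL}}\pmod 2\), the count \(H_{\mathrm{limited}}\) equals \(h_{\mathrm{RL}}\), and \(2(z_2-z_{RL})=h_{\mathrm{RL}}-H_{\mathrm{limited}}=0\).

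The only real obstacle is the lower bound \(z_{RL}\ge\Z\) at these two orders, which this paper does not prove. It rests on the deposited certificates of \cite{zrl7}. I would make that dependence explicit, and note that the deletion bound \(z_{RL}(18,7)\ge79\) alone would not suffice.
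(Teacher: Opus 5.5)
Your proposal is correct and follows essentially the paper's route: both proofs take the companion values $z_{RL}(17,7)=77=\Z(17,7)$ and $z_{RL}(18,7)=81=\Z(18,7)$ from \cite{zrl7} and collapse the chain $z_{RL}\le z_2\le z_{2A}\le\Z$. The paper also invokes Theorem~\ref{thm:low-hole} with $h_{\mathrm{RL}}=1$; as you observe, that step is redundant once the sandwich is in place, so you rightly treat it only as a consistency check.
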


\begin{proof}
The companion computation \cite{zrl7} gives the recursive-line values
\eqref{eq:n7-companion} together with the labelled pairings and the
recursive-line replays for all \(2926\) and \(3240\) edge pairs.  At
both orders the value equals the cell bound, \(\Z(17,7)=77\) and
\(\Z(18,7)=81\), so the recursive-line hole count
\(h_{\mathrm{RL}}(m,7)=7m+z(m,7)-2\Z(m,7)\) equals \(1\) in each
case.  Theorem~\ref{thm:low-hole} then gives \(z_{2A}=z_2\), and the
hierarchy \(z_{RL}\le z_2\le z_{2A}\le\Z\) with \(z_{RL}=\Z\) forces
equality throughout.
\end{proof}

Combining the three features above with
Propositions~\ref{prop:n7-m7} and~\ref{prop:n7-m17m18},
{no overtake occurs at width \(7\) at any order.  The only order at
which the recursive-line equality is not settled is \(m=8\), where
Theorem~\ref{thm:low-hole} gives \(z_{2A}(8,7)=z_2(8,7)\) while
\(z_2(8,7)\in[38,39]\) remains open.}

\begin{theorem}\label{thm:n7-cases}
For every \(m\ge7\) with \(m\ne8\),
\[
z_{2A}(m,7)=z_{RL}(m,7)=z_2(m,7).
\]
At \(m=8\), Theorem~\ref{thm:low-hole} still gives
\(z_{2A}(8,7)=z_2(8,7)\), while \(z_2(8,7)\in[38,39]\) remains open;
in particular no overtake occurs at width \(7\) at any order.
\end{theorem}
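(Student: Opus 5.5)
The statement to be proved is Theorem~\ref{thm:n7-cases}. It is an assembly result, so the plan is to split the range $m\ge7$ into three groups and close each with an earlier result.

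\emph{The order $m=7$.} Proposition~\ref{prop:n7-m7} gives $z_{2A}(7,7)=z_{RL}(7,7)=z_2(7,7)=33$ directly. This is the only order in the range with $h_{\mathrm{RL}}\ge3$, and it is handled by that proposition's finite enumeration.

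\emph{The orders $m\ge9$.} Table~\ref{tab:n=7} gives $h_{\mathrm{RL}}(m,7)\le1$ at every such order: from \eqref{eq:n7-companion} and the companion values for $9\le m\le18$, and from Lebedev's formula $z_{RL}(m,7)=4m+10$ for $m\ge19$. I will argue as follows.
\begin{itemize}
\item[(a)] From $h_{\mathrm{RL}}\le1$, Theorem~\ref{thm:low-hole} gives $z_{2A}(m,7)=z_2(m,7)$.
\item[(b)] To get $z_2=z_{RL}$, apply the cell identity \eqref{eq:cell} to a limited extremal configuration $G_L$. Since $d(G_L)=0$, this gives
\[
2\bigl(z_2-z_{RL}\bigr)=h_{\mathrm{RL}}-H_{\mathrm{limited}}\le h_{\mathrm{RL}}\le1.
\]
The left-hand side is a nonnegative even integer, so it is $0$ and $z_2=z_{RL}$.
\end{itemize}
For $m=17,18$ the conclusion is also exactly Proposition~\ref{prop:n7-m17m18}. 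For $m\ge19$ the equality $z_2=z_{RL}$ is additionally recorded in \eqref{eq:n7-Lebedev}.

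\emph{The order $m=8$.} Here $h_{\mathrm{RL}}(8,7)=2$, so Theorem~\ref{thm:low-hole} still yields $z_{2A}(8,7)=z_2(8,7)$. The interval $z_2(8,7)\in[38,39]$ comes from \eqref{eq:n7-Lebedev} together with the hierarchy. The same parity argument as in (b) allows $z_2-z_{RL}\in\{0,1\}$, which is consistent with that interval, so the theorem leaves this order open.

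Combining the three groups, there is no overtake at any $m\ge7$, and $z_{RL}=z_2$ everywhere except $m=8$.

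The main obstacle is not in this assembly, which is routine. It lies upstream, in the inputs: the computed entries of Table~\ref{tab:n=7} for $9\le m\le18$, which rest on the companion certificates, and the finite reducibility enumeration inside Proposition~\ref{prop:n7-m7}. The step that needs care here is the parity argument in (b). It must be applied with $d=0$ for the limited extremal configuration, so that $z_2=z_{RL}$ follows without any enumeration whenever $h_{\mathrm{RL}}\le1$.
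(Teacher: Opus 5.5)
Your proposal is correct and follows the paper's own assembly. You use Proposition~\ref{prop:n7-m7} at $m=7$, Theorem~\ref{thm:low-hole} at every order with $h_{\mathrm{RL}}(m,7)\le2$ (including $m=8$), and Proposition~\ref{prop:n7-m17m18} at $m=17,18$. Your explicit parity step $2(z_2-z_{RL})=h_{\mathrm{RL}}-H_{\mathrm{limited}}\le1$, which gives $z_2=z_{RL}$ for $9\le m\le16$, is exactly the argument the paper records in \eqref{eq:disc-threshold} and uses at width four, so nothing is missing.
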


\section{Discussion}\label{sec:discussion}

The two questions of the introduction respond differently to the
recursive-line hole count, and it is worth separating them.

\smallskip\noindent\textbf{Overtake and equality are different
questions.}  The low-hole reduction (Theorem~\ref{thm:low-hole})
settles the overtake whenever \(h_{\mathrm{RL}}(m,n)\le2\); it is a
statement about \(z_{2A}\) and \(z_2\) and says nothing about the
relation of \(z_2\) to \(z_{RL}\).  The recursive-line equality
\(z_2=z_{RL}\) is a finer, value-level assertion, connected to the
hole counts only through the cell identity.

\smallskip\noindent\textbf{The threshold \(h_{\mathrm{RL}}=2\).}
Applied to a limited extremal configuration, the cell identity
\eqref{eq:cell} reads
\begin{equation}\label{eq:disc-threshold}
2\bigl(z_2(m,n)-z_{RL}(m,n)\bigr)
=h_{\mathrm{RL}}(m,n)-H_{\mathrm{limited}}(m,n).
\end{equation}
As the left side is a nonnegative even integer,
\(h_{\mathrm{RL}}\le1\) forces \(z_2=z_{RL}\).  At
\(h_{\mathrm{RL}}=2\), however, \eqref{eq:disc-threshold} leaves
\(z_2-z_{RL}\in\{0,1\}\) according as \(H_{\mathrm{limited}}=2\) or
\(0\), so the identity alone cannot decide the equality.  Thus
\(h_{\mathrm{RL}}=2\) is the precise threshold at which the equality
stops being forced, and every order at which it is open has
\(h_{\mathrm{RL}}\ge2\).

\smallskip\noindent\textbf{Larger hole counts were easier.}  At
\(h_{\mathrm{RL}}\in\{3,4\}\) the low-hole reduction does not apply,
so the overtake question is settled there by the finite exclusion of
candidate configurations rather than by the identity.  That exclusion
yields the stronger bound \(z_{2A}\le z_{RL}\), and the unconditional
hierarchy \(z_{RL}\le z_2\le z_{2A}\) then collapses to equality.
Such orders are closed in spite of their larger hole count: it is the
method, not the size of \(h_{\mathrm{RL}}\), that determines whether
the equality is settled.

\smallskip\noindent\textbf{The four open orders.}  In the computed
range \(n\le7\) the orders at which the equality is left open are
exactly those at which the literature supplies an interval rather
than a value: \((12,4)\), \((14,4)\), \((11,5)\), and \((8,7)\).
All four have \(h_{\mathrm{RL}}\le2\), with \(h_{\mathrm{RL}}=2\)
except at \((14,4)\), where \(z_{RL}(14,4)\) is only bracketed and
\(h_{\mathrm{RL}}(14,4)\in\{0,2\}\).  The remaining
\(h_{\mathrm{RL}}=2\) orders,
\((10,4)\), \((7,5)\) and \((10,5)\), are closed by exact values
\cite[Theorem~6.3]{leb26a}, which shows that the hole count is not the
deciding factor.  At each of the four open orders the low-hole
reduction still gives \(z_{2A}=z_2\), so no overtake occurs; only the
value of \(z_2\) relative to \(z_{RL}\) remains to be determined.

\noindent\textbf{Acknowledgement}
	We are thankful to Prof. Yannan Chen, who provided \cite{zrl7}.
This work was partially supported by Jiangsu Provincial Scientific Research Center of Applied Mathematics (Grant No. BK20233002), Research Center for Intelligent Operations Research, The Hong Kong Polytechnic University (4-ZZT8).

\end{document}